\documentclass[12pt,reqno]{amsart}
\usepackage{moreverb}

\usepackage{epsfig,color}
\usepackage{amsmath,amssymb,amsfonts}
\usepackage{subfigure,psfrag}
\usepackage{ifthen}
\usepackage{epstopdf}
\usepackage{fullpage}

\setlength{\parindent}{0pt}
\definecolor{darkgreen}{rgb}{0,0.5,0}

\newtheorem{theorem}{Theorem}
\newtheorem{lemma}[theorem]{Lemma}

\newtheorem{remark}[theorem]{Remark}

\def\R{\mathbb{R}} 
\def\set#1#2{\left\{#1\,\big|\,#2\right\}}
%
\newcommand{\norm}[3][]{#1\|#2#1\|_{#3}}

\def\Lop#1#2{L\big(#1;#2\big)}

\newcommand{\spl}[3]{(#1,#2)_{#3}}
\newcommand{\spe}[3]{\langle#1,#2\rangle_{#3}}
\newcommand{\BBstab}[2]{\widetilde{\BB}\left(#1;#2\right)} 
\newcommand{\BBVstab}[2]{\widetilde{\BB}_V\left(#1;#2\right)} 

\def\I{\mathbf{I}} 


\def\A{\mathbf{A}} 
\def\b{\mathbf{b}}
\def\r{c}
\def\u{\mathbf{u}} 
\def\v{\mathbf{v}} %
\def\w{\mathbf{w}} 

\def\TT{\mathcal{T}} 
\def\NN{\mathcal{N}} 

\def\EE{\mathcal{E}} 
\def\EEr{\EE_{\Gamma}} 
\def\EEt{\EE_T} 

\def\AA{\mathcal{A}}
\def\BB{\mathcal{B}}
\def\HH{\mathcal{H}}
\def\OO{\mathcal{O}}

\def\normal{\mathbf{n}}

\def\CC{\mathcal{C}} 
\def\PP{\mathcal{P}} 

\def\VV{\mathcal{V}} 
\def\KK{\mathcal{K}} 
\def\SS{\mathcal{S}} 

\def\II{\mathcal{I}} 
\def\IIh{\II_h} 

\def\diam{{\operatorname{diam}}}
\def\div{\operatorname{div}}
\def\loc{{\operatorname{\ell oc}}}

\title[A non-symmetric FVM-BEM coupling]{A non-symmetric coupling of the 
finite volume method and the boundary element method}

\author{Christoph Erath}
\address{TU Darmstadt, Department of Mathematics, Dolivostra\ss{}e 15, 64293 Darmstadt, Germany}
\email{erath@mathematik.tu-darmstadt.de\quad\rm(corresponding author)}

\thanks{C. Erath (corresponding author): TU Darmstadt, Germany; erath@mathematik.tu-darmstadt.de}
\thanks{G. Of: TU Graz, Austria; of@tugraz.at}
\thanks{F.-J. Sayas: University of Delaware, USA; fjsayas@math.udel.edu}

\author{G\"unther Of}
\address{TU Graz, Steyrergasse 30, 8010 Graz, Austria}
\email{of@tugraz.at}

\author{Francisco-Javier Sayas}
\address{University of Delaware,
		 532 Ewing Hall Newark, DE 19716, USA}
\email{fjsayas@math.udel.edu}

\date{\bf\today}
%

\begin{document}

\begin{abstract}
As model problem we consider the prototype for flow and transport
of a concentration in porous media in an interior domain and couple 
it with a diffusion process in the corresponding unbounded exterior domain. 
To solve the problem
we develop a new non-symmetric coupling between the vertex-centered finite volume 
and boundary element method. This discretization provides naturally 
conservation of local fluxes
and with an upwind option also stability in the convection dominated case. 
We aim to provide a first rigorous analysis
of the system for different model parameters; stability, convergence, 
and a~priori estimates. This includes the use of
an implicit stabilization, known from the finite element and boundary element
method coupling.
Some numerical experiments conclude the work and confirm the theoretical
results.\\[0.5\baselineskip]
\noindent \textbf{Keywords.} finite volume method, boundary element method, 
non-symmetric coupling,
convection dominated, existence and uniqueness, convergence, a~priori estimate\\[0.5\baselineskip]
\noindent \textbf{Mathematics subject classification.}
65N08, 65N38, 65N12, 65N15
\end{abstract}

\maketitle
\section{Model problem and introduction}
Throughout this work,
let $\Omega\subset \R^d$, $d=2,3$, be a bounded domain with 
connected polygonal Lipschitz boundary $\Gamma$ and 
$\Omega_e=\R^d\backslash\overline{\Omega}$ is the corresponding
unbounded exterior domain. 
We consider the same model problem as in~\cite{Erath:2012-1,Erath:2013-2}:
find $u$ and 
$u_e$ such that
\begin{subequations}
\label{eq:model}
\begin{alignat}{2}
\label{eq1:model}
   \div (-\A \nabla u + \b u)+\r u  &= f \quad & &\text{in }\Omega,\\
\label{eq2:model}
-\Delta u_e &=0 \quad & &\text{in } \Omega_e,\\
\label{eq3:model}
u_e(x)&=C_{\infty}\log|x|+\OO(1/|x|) \quad& &\text{for }|x|\to \infty,\quad d=2,\\
\label{eq3a:model}
u_e(x)&=\OO(1/|x|) \quad& &\text{for }|x|\to \infty,\quad d=3,\\
\label{eq4:model}
 u&=u_e+u_0 \quad & &\text{on } \Gamma, \\
 \label{eq5:model}
 ( \A \nabla u-\b u)\cdot\normal&=
 \frac{\partial u_e}{\partial \normal}+t_0\quad & &\text{on }  
 \Gamma^{in},\\
 \label{eq6:model}
 ( \A \nabla u)\cdot\normal&=
 \frac{\partial u_e}{\partial \normal}+t_0\quad & &\text{on } \Gamma^{out},
\end{alignat}
\end{subequations}
where $\A$ is a symmetric diffusion matrix, 
$\b$ is a possibly dominating velocity field, 
$\r$ is a reaction function, $f$ is a source term, and $C_\infty$ is 
an unknown constant. The coefficients are allowed to be variable.
The coupling boundary 
$\Gamma=\partial \Omega= \partial\Omega_e$ is divided in an inflow and outflow part, 
namely $\Gamma^{in}:=\set{x\in\Gamma}{\b(x)\cdot\normal(x)<0}$ and
$\Gamma^{out}:=\set{x\in\Gamma}{\b(x)\cdot\normal(x)\geq 0}$, respectively, 
where $\normal$ is the normal vector on $\Gamma$ pointing outward with respect
to $\Omega$. We allow prescribed jumps $u_0$ and $t_0$ on $\Gamma$. The radiation condition for the two dimensional case, which will be complemented with the additional hypothesis that the diameter of $\Omega$ is less than one, guarantees that our problem has a unique solution. Other radiation conditions are also possible, but some lead to restrictions on the data. Changing from one to the other is a relatively simple exercise adding sources. See~\cite{McLean:2000-book,Costabel:1985-1} for more information on radiation conditions.

The model problem in the interior domain $\Omega$ is the prototype
for flow and transport of a concentration in porous media.
Usually, boundary values such as Dirichlet and/or Neumann boundary 
conditions are needed to solve the problem.
These problems are often convection dominated and the conservation law, e.g., 
local conservation of fluxes, should also be preserved for a numerical approximation
of the solution.
Therefore, a finite volume
method (FVM) is often the method of choice since they provide an easy option 
to stabilize the convection term and they natural preserve conservation of
numerical fluxes due to
their formulation. However, if the domain is unbounded one would have to truncate
the domain. The above formulation solves also another issue, i.e., if we do
not know any boundary conditions, we assume a diffusion process in the
corresponding (unbounded) exterior domain $\Omega_e$, which ``replaces''
the boundary values.
The method of choice for unbounded domains is the boundary element method (BEM) 
which reduces the discretization to the boundary and therefore avoids
the truncation of $\Omega_e$.
Therefore, we consider 
an FVM-BEM coupling as in~\cite{Erath:2010-phd,Erath:2012-1,Erath:2013-2}.
To the best of the authors knowledge, these works are the 
first theoretical justifications 
of a FVM-BEM coupling, where  
a three field coupling
approach is used with either the vertex-centered 
(finite volume element method, box method) FVM or the 
cell-centered FVM. 

In this work we analyze and verify a non-symmetric FVM-BEM coupling with
the vertex-centered FVM, in the following only named FVM. The main 
motivation of using this is to get an easier coupling formulation and a smaller
system of linear equations, which saves computational costs.
The idea of a non-symmetric coupling approach goes back
to~\cite{Johnson:1980-1,Brezzi:1979-1}. This
coupling formulation applied for a finite element method (FEM)-BEM
discretization is also known as
Johnson-N{\'e}d{\'e}lec coupling.
However, the analysis in this early works relied on 
specific choices of the discretization spaces or on
the compactness of a certain 
integral operator, which was in fact a restriction to a smooth boundary.
In particular, a rigorous
mathematical analysis for Lipschitz domains was not known. 
Recently, the work in~\cite{Sayas:2009-1} provided a first analysis, which
overcame these restrictions. Meanwhile, several extensions and simplifications
are possible, such that a SIAM review paper~\cite{Sayas:review} was published.
Among these extensions there are results on the non-symmetric formulation for
the potential equation with variable coefficients~\cite{Of:2013-1,Steinbach}, 
non-linearities~\cite{wien,Dirk:Ela}, for 
elasticity~\cite{Dirk:Ela,Steinbach:ela}, and for boundary value 
problems~\cite{Sayas:relaxing,Of:Drwp}.   
In addition, similar results have been reported on related coupling
formulations~\cite{wien,Sayas:relaxing} and the DG-BEM coupling~\cite{Heuer:DG}. 
We want to mention that the counterpart to the non-symmetric coupling
is the so called the symmetric coupling first introduces in~\cite{Costabel:1987-1}.
However, symmetry is referred to a diffusion--diffusion transmission problem, i.e.,
the whole system is symmetric.
We stress that this would be destroyed if 
one applies convection in the interior domain.

There exist a couple of papers, which analyze the vertex-centered 
FVM, e.g.,~\cite{Bank:1987-1,Hackbusch:1989-1} to mention only the very first
works. It is well known that
for pure diffusion with piecewise constant diffusion coefficient on a primal mesh
the standard FEM and the FVM bilinear form are exactly the same. Thus the schemes
differ basically only on the right-hand side. However, for all other  
diffusion problems~\cite{Cai:1991-1} 
and a possible convection field and a reaction term the systems are different. 
Contrary to standard FEM, FVM still provides local flux conservation due to 
its formulation and provides an easy upwind stability option for convection dominated
problems.
The standard analysis approach makes use of a comparison between the
FEM and FVM bilinear 
form~\cite{Bank:1987-1,Hackbusch:1989-1,Cai:1991-1,Ewing:2002-1,Chatzipantelidis:2002-1}.
For our FVM-BEM coupling we may apply similar techniques for the FVM part.
Note that contrary to a classical FEM-BEM coupling we do not have a
classical Galerkin orthogonality property due to the FVM formulation based on the
conservation law. Thus the analysis differs significantly to an FEM-BEM analysis.
However, we use the equivalent formulation of a stabilized continuous 
coupling formulation, extended here for the convection-diffusion-reaction problem in $\Omega$,
and compute an ellipticity constant.
Based on the continuous stabilization we introduce a stabilization for the 
FVM-BEM coupling.
This is needed for pure diffusion models and for convection-diffusion-reaction problems, where
the energy norm reduced to a semi-norm. 
We stress that the stabilization is only needed for theoretical purposes since the formulation
is equivalent to the standard system. 
We aim to provide a discrete ellipticity estimate, convergence, and a~priori estimates for the 
FVM-BEM coupling.
Our new analysis technique
gives us a recipe
for the coupling of BEM with a non-Galerkin method like FVM.
Furthermore, this work improves the 
results in~\cite{Erath:2010-phd,Erath:2012-1} for a three field FVM-BEM coupling, 
where we had
to assume a little bit more regularity on the unknown exterior conormal
solution and some constraints on the convection and reaction terms
for some special model problem configurations. 
However, as for the non-symmetric FEM-BEM coupling we have a theoretical constraint
on the eigenvalues of $\A$, which is not needed in~\cite{Erath:2010-phd,Erath:2012-1}.
 
Throughout, we denote by $L^m(\cdot)$ and $H^m(\cdot)$, $m>0$ the standard Lebesgue 
and Sobolev spaces equipped with the usual norms $\norm{\cdot}{L^2(\cdot)}$ and
$\norm{\cdot}{H^m(\cdot)}$, respectively. For $\omega\subset\Omega$,
$\spl{\cdot}{\cdot}{\omega}$ is the $L^2$ scalar product. The space $H^{m-1/2}(\Gamma)$
is the space of all traces of functions from $H^m(\Omega)$ and the duality
between $H^{m}(\Gamma)$ and $H^{-m}(\Gamma)$ is given by the extended 
$L^2$-scalar product $\spe{\cdot}{\cdot}{\Gamma}$. 
The space $H^1_{\loc}(\Omega):=\set{v:\Omega\to\R}{v|_K\in H^1(K), \, \text{for all } K\subset \Omega \,\mbox{open and bounded}}$ collects functions with local $H^1$ behavior. Furthermore, the Sobolev space $W^{1,\infty}$ contains exactly the Lipschitz 
continuous functions. If it is clear from the context, we do not use a notational 
difference for functions in a domain and its traces.
To simplify the presentation we equip the 
space $\HH:=H^1(\Omega)\times H^{-1/2}(\Gamma)$
with the norm
\begin{align*}
	\norm{\v}{\HH}^2:=\norm{v}{H^1(\Omega)}^2+\norm{\psi}{H^{-1/2}(\Gamma)}^2
\end{align*}
for $\v=(v,\psi)\in\HH$.

With this notation we can specify the model data as:
the diffusion matrix $\A:\Omega\to\R^{d\times d}$ has entries in 
$W^{1,\infty}(\Omega)$,
is bounded, symmetric and uniformly positive definite, i.e.,
there exist positive constants $C_{\A,1}$ and $C_{\A,2}$
with
$C_{\A,1}|\mathbf{v}|^2\leq \mathbf{v}^T\A(x)\mathbf{v}
	        \leq C_{\A,2}|\mathbf{v}|^2$
for all $\mathbf{v}\in \R^d$ and almost every $x\in\Omega$.
We will also admit coefficients $\A$ that are $\TT$-piecewise constant, where $\TT$ denotes the triangulation 
of $\Omega$ introduced in subsection~\ref{subsec:triangulation}, satisfying identical 
symmetry and uniform positive definiteness assumptions. Note that the best constant $C_{\A,1}$ equals the infimum over $x\in \Omega$ of the minimum eigenvalue of $\A(x)$, 
which we will denote $\lambda_{\min}(\A)$.
Furthermore, $\b\in W^{1,\infty}(\Omega)^d$
and $\r\in L^{\infty}(\Omega)$
satisfy 
\begin{align}
  \label{eq:bcestimate1}
  \gamma(x):=\frac{1}{2}\div\b(x)+\r(x), \qquad\gamma(x)\geq 0\quad
  \text{for almost every }x\in\Omega
\end{align}
with the function $\gamma\in L^{\infty}(\Omega)$.
We stress that our analysis holds
for constant $\b$ and $\r=0$ as well.
Finally, we choose the right-hand side $f\in L^2(\Omega)$, $u_0\in H^{1/2}(\Gamma)$,
and $t_0\in H^{-1/2}(\Gamma)$. In the two dimensional
case we additionally assume $\diam(\Omega)<1$ to ensure $H^{-1/2}(\Gamma)$ ellipticity 
of the single layer operator defined below.

Then our model problem reads in a weak sense: find $u\in H^1(\Omega)$ and 
$u_e \in H^1_{\loc}(\Omega_e)$ such that~\eqref{eq1:model}--\eqref{eq6:model}
hold.

The model problem~\eqref{eq:model} admits a unique solution for both, the 
two and three dimensional case~\cite{Erath:2012-1}.
\begin{remark}
To replace the radiation condition~\eqref{eq3:model} by
$u_e(x)=\OO(1/|x|)$ for $|x|\to \infty$ in two dimensions
one would have to assume the
the scaling condition
\begin{align*}
	\spe{\partial u_e/\partial\normal}{1}{\Gamma}=0
\end{align*}
to guarantee solvability. As opposed for the purely diffusive case, 
this condition cannot be easily transformed into a condition on the data.	
\end{remark}

The content of this paper is organized as follows. 
Section $2$ gives a short summary on integral equations and
the weak formulation of our model problem based on the non-symmetric
approach. We show an ellipticity estimate 
through an equivalent stabilized weak formulation 
and state the ellipticity constant explicitly. 
In section $3$ we introduce the non-symmetric FVM-BEM coupling to solve our 
model problem.
Section $4$ proves stability, convergence, and an a~priori result for 
our coupling.
Numerical experiments, found in section $5$,
confirm the theoretical results. Some conclusions complete to work.
%
\section{Integral equation and weak coupling formulation}
The representation formula for the exterior Laplace equation~\eqref{eq2:model} with the
radiation condition~\eqref{eq3:model}-\eqref{eq3a:model} and 
$\phi(x)=\frac{\partial}{\partial \normal}u_e(x)|_\Gamma$, $x\in\R$
reads
\begin{align}
  \label{eq:repformula}
  u_e(x)=-\int_\Gamma G(x-y)\phi(y)\,ds_y
         +\int_\Gamma \frac{\partial}{\partial\normal_y}G(x-y)u_e(y)|_\Gamma\,ds_y
\end{align}
with the fundamental solution for the Laplace operator
\begin{align*}
  G(z):=\begin{cases}\displaystyle
  -\frac{1}{2\pi}\log|z|\quad&\text{for }z\in\R^2\backslash\{0\},\\[2ex]
\displaystyle
  \frac{1}{4\pi}\frac{1}{|z|}&\text{for }z\in\R^3\backslash\{0\}.
  \end{cases}
\end{align*}
From~\eqref{eq:repformula} we obtain (taking traces) the boundary integral 
equation on $\Gamma$
\begin{align}
\label{eq:cal1}
u_e|_\Gamma=(1/2+\KK)u_e|_\Gamma-\VV\phi.
\end{align}
The single layer operator $\VV$ and the double layer
operator $\KK$ are given, for smooth enough input, by
\begin{align*}
(\VV \psi)(x)=\int_{\Gamma}\psi(y)G(x-y)\,ds_{y}
 	\qquad
(\KK \theta)(x)=
  \int_{\Gamma}\theta(y)\frac{\partial}{\partial 
  \normal_{y}}G(x-y)\,ds_{y} \qquad x\in \Gamma,
\end{align*}
where $\normal_y$ is a normal vector with respect to $y$.
The integral equation~\eqref{eq:cal1} holds on $\Gamma$ except on corners and edges. 
We recall~\cite[Theorem 1]{Costabel:1988-1} that  these operators can be extended to bounded operators
\begin{align*}
\VV \in\Lop{H^{s-1/2}(\Gamma)}{H^{s+1/2}(\Gamma)},
	\qquad
\KK \in\Lop{H^{s+1/2}(\Gamma)}{H^{s+1/2}(\Gamma)},
	\quad s\in [-\tfrac12,\tfrac12].  
\end{align*}
It is also well-known that $\VV$ is symmetric and $H^{-1/2}(\Gamma)$ elliptic, since 
we additionally assume $\diam(\Omega)<1$ in the two dimensional case, 
which can always be achieved by scaling. The expressions
\begin{align*}
\norm{\cdot}{\VV}^2:=\spe{\VV\cdot}{\cdot}{\Gamma},
	\qquad
\norm{\cdot}{\VV^{-1}}^2:=\spe{\cdot}{\VV^{-1}\cdot}{\Gamma}
\end{align*}
define norms in $H^{-1/2}(\Gamma)$ and $H^{1/2}(\Gamma)$, respectively. These norms are equivalent to the usual ones.

We consider a weak form of the model problem~\eqref{eq:model} in terms of boundary integral
operators. For that we use the non-symmetric approach, i.e,
calculate the weak formulation of the interior problem and replace the interior
conormal derivative by the exterior $\phi:=\partial u_e/\partial\normal|_\Gamma$ 
and the corresponding jump relations $t_0$,~\eqref{eq5:model}--\eqref{eq6:model}. Second,
we take the weak form of~\eqref{eq:cal1} and replace the exterior trace
$u_e|_\Gamma$ by the interior trace $u_{|\Gamma}$ and the jump $u_0$,~\eqref{eq4:model}. Then  
the coupling reads:
find 
$u\in H^1(\Omega)$, $\phi\in H^{-1/2}(\Gamma)$
such that 
\begin{subequations}
\begin{align}
\label{eq1:weakfem}
   \AA(u,v)-\spe{\phi}{v}{\Gamma}
   &= \spl{f}{v}{\Omega}+\spe{t_0}{v}{\Gamma},\\
   \label{eq2:weakfem} 
   \spe{\psi}{(1/2-\KK)u}{\Gamma}+\spe{\psi}{\VV\phi}{\Gamma}&=\spe{\psi}{(1/2-\KK)u_0}{\Gamma}
\end{align}
\end{subequations}
for all 
$v\in H^1(\Omega)$, $\psi\in H^{-1/2}(\Gamma)$. The bilinear form in~\eqref{eq1:weakfem} is given by
\begin{align*}
  \AA(u,v):=\spl{\A\nabla u-\b u}{\nabla v}{\Omega}+\spl{\r u}{v}{\Omega}
+\spe{\b\cdot\normal \,u}{v}{\Gamma^{out}}.
\end{align*}
%
%
%
%
\begin{lemma}
  \label{lem:coerccont}
  The bilinear form $\AA$ is coercive and 
  continuous on $H^1(\Omega)\times H^1(\Omega)$, i.e.,
  for all $v,w\in H^1(\Omega)$ and $\gamma(x)$ from assumption~\eqref{eq:bcestimate1}
  there holds
  \begin{align}
  \label{eq:bilinearintcoerciv}
  \AA(v,v)&\geq 
  \begin{cases}\displaystyle
    C_{\AA,1}\norm{v}{H^1(\Omega)}^2\quad&
	\text{for }\gamma(x)>0\text{ almost everywhere in }\Omega,\\[1mm]
    C_{\AA,1}^\star\norm{v}{H^1(\Omega)}^2\quad&
	\text{for }\gamma(x)>0\text{ on }\omega\subsetneq\Omega,|\omega|>0, \gamma(x)=0 \text{ elsewhere},\\[1mm]
    C_{\AA,1}'\norm{\nabla v}{L^2(\Omega)}^2&
  	\text{for }\gamma(x)=0\text{ almost everywhere in } \Omega,
    \end{cases}	\\[2mm]
  \label{eq:bilinearintcont}
    |\AA(w,v)|&\leq C_{\AA,2}\norm{w}{H^1(\Omega)}\norm{v}{H^1(\Omega)}.
  \end{align}
  Here, the constants
  $C_{\AA,1}=\min\{\lambda_{\min}(\A),\inf_{x\in\Omega}\gamma(x)\}>0$, $C_{\AA,1}'=\lambda_{\min}(\A)>0$
  and $C_{\AA,2}>0$, 
  depend on the data $\A$, $\b$ and $\r$.
  The constant $C_{\AA,1}^\star=\min\{\lambda_{\min}(\A),C(\gamma(x),\omega,\Omega)\}>0$
  depends additionally on the constant $C(\gamma(x),\omega,\Omega)>0$, which 
  is not known but depends
  on $\gamma(x)>0$ in $\omega$, $\omega$, and $\Omega$.
\end{lemma}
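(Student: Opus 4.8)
The plan is to prove coercivity by expanding $\AA(v,v)$ and reducing it, via an integration by parts on the convection term, to a lower bound that involves only the diffusion contribution and the reaction function $\gamma$. Writing
\[
\AA(v,v)=\spl{\A\nabla v}{\nabla v}{\Omega}-\spl{\b v}{\nabla v}{\Omega}+\spl{\r v}{v}{\Omega}+\spe{\b\cdot\normal\,v}{v}{\Gamma^{out}},
\]
I first bound the diffusion term below by $\lambda_{\min}(\A)\norm{\nabla v}{L^2(\Omega)}^2$ using the uniform positive definiteness of $\A$. For the convection term I use the pointwise identity $(\b\cdot\nabla v)\,v=\tfrac12\,\b\cdot\nabla(v^2)$ together with the divergence theorem (licit since $\b\in W^{1,\infty}(\Omega)^d$) to obtain
\[
-\spl{\b v}{\nabla v}{\Omega}=\tfrac12\spl{(\div\b)\,v}{v}{\Omega}-\tfrac12\spe{\b\cdot\normal\,v}{v}{\Gamma}.
\]
Combining the first term on the right with the reaction term and invoking the definition $\gamma=\tfrac12\div\b+\r$ collapses the two volume contributions into $\spl{\gamma v}{v}{\Omega}$.

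Next I would treat the boundary terms. Splitting $\Gamma=\Gamma^{in}\cup\Gamma^{out}$ and adding the surviving convection boundary integral to $\spe{\b\cdot\normal\,v}{v}{\Gamma^{out}}$ leaves
\[
-\tfrac12\spe{\b\cdot\normal\,v}{v}{\Gamma^{in}}+\tfrac12\spe{\b\cdot\normal\,v}{v}{\Gamma^{out}}.
\]
By the very definition of $\Gamma^{in}$ and $\Gamma^{out}$ the factor $\b\cdot\normal$ is negative on the first piece and nonnegative on the second, so both contributions are nonnegative and may be discarded. This yields the master estimate
\[
\AA(v,v)\geq\lambda_{\min}(\A)\norm{\nabla v}{L^2(\Omega)}^2+\spl{\gamma v}{v}{\Omega},
\]
from which all three cases of~\eqref{eq:bilinearintcoerciv} follow.

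The three cases then differ only in how much the reaction term delivers. When $\gamma>0$ is bounded away from zero almost everywhere, $\spl{\gamma v}{v}{\Omega}\geq(\inf_\Omega\gamma)\norm{v}{L^2(\Omega)}^2$, and adding the gradient term gives full $H^1$ coercivity with $C_{\AA,1}=\min\{\lambda_{\min}(\A),\inf_\Omega\gamma\}$. When $\gamma\equiv0$, only the gradient term survives, producing the semi-norm estimate with $C_{\AA,1}'=\lambda_{\min}(\A)$; this degeneracy is exactly what later forces the stabilization announced in the introduction. The intermediate case, where $\gamma>0$ only on a subdomain $\omega\subsetneq\Omega$ of positive measure, is the main obstacle, since the reaction term controls $\norm{v}{L^2(\omega)}$ but not $\norm{v}{L^2(\Omega)}$. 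Here I would invoke a generalized Poincar\'e--Friedrichs inequality bounding $\norm{v}{L^2(\Omega)}$ by $\norm{\nabla v}{L^2(\Omega)}$ together with $\norm{v}{L^2(\omega)}$; its constant depends on $\gamma$, $\omega$, and $\Omega$ in a non-explicit way, which is precisely the origin of the unknown constant $C(\gamma(x),\omega,\Omega)$ entering $C_{\AA,1}^\star$.

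Finally, continuity~\eqref{eq:bilinearintcont} is routine. Applying the Cauchy--Schwarz inequality term by term and using the boundedness of $\A$ (constant $C_{\A,2}$), of $\b\in W^{1,\infty}(\Omega)^d$, and of $\r\in L^\infty(\Omega)$ bounds the three volume contributions by a constant multiple of $\norm{w}{H^1(\Omega)}\norm{v}{H^1(\Omega)}$. For the remaining boundary term I would use the trace inequality $\norm{\cdot}{L^2(\Gamma)}\leq C_{\mathrm{tr}}\norm{\cdot}{H^1(\Omega)}$ together with the essential boundedness of $\b\cdot\normal$ on $\Gamma^{out}$. Collecting these estimates furnishes a constant $C_{\AA,2}>0$ depending only on the data $\A$, $\b$, and $\r$.
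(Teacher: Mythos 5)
Your proof is correct and follows essentially the same route as the paper: the identity obtained by applying the divergence theorem to the convection term is exactly the paper's computation (rearranged so that the sign-definite pieces $-\tfrac12\spe{\b\cdot\normal\,v}{v}{\Gamma^{in}}$ and $\tfrac12\spe{\b\cdot\normal\,v}{v}{\Gamma^{out}}$ are discarded), yielding the same master bound $\AA(v,v)\geq\lambda_{\min}(\A)\norm{\nabla v}{L^2(\Omega)}^2+\spl{\gamma v}{v}{\Omega}$. The treatment of the three cases also matches, with your generalized Poincar\'e--Friedrichs inequality playing the role of the paper's compactness (Deny--Lions) argument, and the continuity bound is the same Cauchy--Schwarz plus trace-inequality argument.
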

\begin{proof}
There holds
\begin{align*}
  \int_{\Gamma^{out}}\b\cdot\normal \,v^2\,ds\geq
    \frac{1}{2}\int_{\Gamma}\b\cdot\normal\, v^2\,ds
    =\frac{1}{2}\int_{\Omega}\div(\b v^2)\,dx=
    \frac{1}{2}\spl{(\div \b) v}{v}{\Omega}+\spl{\b v}{\nabla v}{\Omega}.
  \end{align*}
If $\frac12\div\b(x)+c(x)\geq\gamma(x)>0$ of assumption~\eqref{eq:bcestimate1} is positive
almost everywhere in $\Omega$, it follows that
  \begin{align*}
    \AA(v,v)&\geq\spl{\A\nabla v}{\nabla v}{\Omega}+
    \frac{1}{2}\spl{(\div \b) v}{v}{\Omega}+
    \spl{\r v}{v}{\Omega}\geq C_{\AA,1}\norm{v}{H^1(\Omega)}^2.
  \end{align*}

If
$\gamma(x)>0$ holds on a set $\omega\subsetneq\Omega$ 
of positive measure but $\gamma(x)=0$ on $\Omega\backslash \omega$, 
we can use a compactness argument (or the Deny-Lions theorem) to 
prove coercivity of $\AA$ in $H^1(\Omega)$. 
Then the coercivity constant $C_{\AA,1}^\star$ is not known.   
When $\gamma(x)=0$ almost everywhere in $\Omega$, 
we only obtain coercivity of $\AA$ with respect to the $H^1$ seminorm
and the constant $C_{\AA,1}'$. 
Using simple arguments, the continuity bound~\eqref{eq:bilinearintcont} can be easily proved with
\begin{align*}
C_{\AA,2}=2\max\{ \norm{\A}{L^\infty(\Omega)^{d\times d}}, 
					\norm{\b}{L^\infty(\Omega)},
					\norm{\r}{L^\infty(\Omega)}\}
		+ C_\Gamma^2 \norm{\b\cdot\normal}{L^\infty(\Gamma^{out})},
\end{align*}
where $C_\Gamma$ is the norm of the the trace operator $H^1(\Omega)\to L^2(\Gamma^{out})$.
\end{proof}
For convenience the system~\eqref{eq1:weakfem}-\eqref{eq2:weakfem} can be written in the product space $\HH=H^1(\Omega)\times H^{-1/2}(\Gamma)$ as follows: we introduce the bilinear form $\BB:\HH\times\HH\to \R$
\begin{align}
	\label{eq:Bfem}
	\BB((u,\phi);(v,\psi))&:=   \AA(u,v)-\spe{\phi}{v}{\Gamma}
      +\spe{\psi}{(1/2-\KK)u}{\Gamma}+\spe{\psi}{\VV\phi}{\Gamma},
\end{align}
and the linear functional
\begin{align}
	\label{eq:Ffem}
	F((v,\psi)):=  \spl{f}{v}{\Omega} +\spe{t_0}{v}{\Gamma}+\spe{\psi}{(1/2-\KK)u_0}{\Gamma}.  
\end{align}
Then~\eqref{eq1:weakfem}-\eqref{eq2:weakfem} is equivalent to: find
$\u\in\HH$ such that
\begin{align}
	\label{eq:FEMBEM}
	\BB(\u;\v)=F(\v) \qquad \text{for all } \v\in \HH.
\end{align}
With integration by parts we calculate
\begin{align*}
	\BB(\v;\v)&=   
	\spl{\A\nabla v}{\nabla v}{\Omega}+\spl{\big(\tfrac12\div\b+\r\big) v}{v}{\Omega}
	-\spe{\b\cdot\normal\, v}{v}{\Gamma^{in}}+\spe{\b\cdot\normal\, v}{v}{\Gamma^{out}}\\
	&\qquad-\spe{\psi}{v}{\Gamma}
      +\spe{\psi}{(1/2-\KK)v}{\Gamma}+\spe{\psi}{\VV\psi}{\Gamma},	
\end{align*}
and thus we see
\begin{align*}
\BB((1,0);(1,0))=\int_\Omega \big(\tfrac12 \div\b+\r\big)+\int_\Gamma |\b\cdot\normal|.
\end{align*}
Thus if $\frac{1}{2}\div\b+\r=0$ in $\Omega$ and $\b\cdot\normal=0$ on $\Gamma$ (in particular, 
when $\b=(0,0)^T$ and $\r=0$), it follows that $\BB((1,0);(1,0))=0$. 
This lack of coercivity will be remedied using an equivalent variational problem 
for the sake of analysis.
 
Therefore, we define the linear operator
\begin{align*}
  P((v,\psi)) := \spe{1}{(1/2-\KK) v + \VV \psi}{\Gamma}=
  \int_\Gamma ((1/2-\KK )v+\VV \psi)
\end{align*}
and introduce a parameter $\beta$ depending on $\gamma(x)$ 
of assumption~\eqref{eq:bcestimate1};
\begin{align}
	\label{eq:beta}
  \beta&:= 
  \begin{cases}\displaystyle
    1\quad& \text{if }\gamma(x)=0\text{ almost everywhere in }\Omega,\\[1mm]
    0\quad& \text{else}.
    \end{cases}	
\end{align}
Then the $\beta$-dependent perturbations of the bilinear form $\BB(\u,\v)$ is
\begin{equation}\label{bbtilde}
\BBstab\u\v:=\BB(\u,\v)+\beta P(\u)\,P(\v),
\end{equation}
and of the linear map $F(\v)$
\begin{equation}\label{ftilde}
\widetilde{F}(\v):=F(\v)+\beta \spe{1}{(1/2-\KK) u_0}\Gamma P(\v).
\end{equation}
Thus a stabilized variational formulation is given by: find $\u\in \HH$ such that
\begin{align}
	\label{eq:FEMBEMstab}
	\BBstab{\u}{\v}=\widetilde{F}(\v) \qquad\text{for all } \v\in \HH.
\end{align}
Note that this type of stabilization has also been 
considered in~\cite{Of:2013-1} and~\cite{wien}. 
We emphasize that this formulation is introduced purely for 
theoretical purposes, and the discretization will be applied 
directly on~\eqref{eq1:weakfem}-\eqref{eq2:weakfem}.
\begin{lemma}\label{lem:equi:fem}
  The variational formulation~\eqref{eq:FEMBEM} and the stabilized version
  in~\eqref{eq:FEMBEMstab} are equivalent.
\end{lemma}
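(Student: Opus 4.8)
The plan is to show that the two variational problems have exactly the same solution set, which for well-posed problems means they have the same unique solution. The key observation is that the stabilization term was engineered to vanish at any genuine solution of the original problem, so adding it changes nothing. Concretely, I would prove that $\u\in\HH$ solves~\eqref{eq:FEMBEM} if and only if it solves~\eqref{eq:FEMBEMstab}.

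First I would dispose of the trivial case $\beta=0$: by the definition~\eqref{eq:beta}, when $\gamma(x)>0$ on a set of positive measure we have $\beta=0$, so $\BBstab\u\v=\BB(\u,\v)$ and $\widetilde F(\v)=F(\v)$ identically, and the two formulations literally coincide. So the only substantive case is $\beta=1$, i.e. $\gamma(x)=0$ almost everywhere in $\Omega$.

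For the case $\beta=1$, the heart of the argument is the claim that the quantity $P(\u)-\spe{1}{(1/2-\KK)u_0}{\Gamma}$ vanishes whenever $\u=(u,\phi)$ solves~\eqref{eq:FEMBEM}. This follows by testing the second equation~\eqref{eq2:weakfem} with the constant function $\psi=1\in H^{-1/2}(\Gamma)$: this yields exactly $\spe{1}{(1/2-\KK)u+\VV\phi}{\Gamma}=\spe{1}{(1/2-\KK)u_0}{\Gamma}$, which by the definition of $P$ is precisely $P(\u)=\spe{1}{(1/2-\KK)u_0}{\Gamma}$. Hence for a solution of~\eqref{eq:FEMBEM} the stabilization contributes $\beta P(\u)P(\v)=\beta\spe{1}{(1/2-\KK)u_0}{\Gamma}P(\v)$, so $\BBstab\u\v=\BB(\u,\v)+\beta P(\u)P(\v)=F(\v)+\beta\spe{1}{(1/2-\KK)u_0}{\Gamma}P(\v)=\widetilde F(\v)$, showing any solution of~\eqref{eq:FEMBEM} solves~\eqref{eq:FEMBEMstab}.

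For the converse, suppose $\u$ solves~\eqref{eq:FEMBEMstab}. I would again test with a distinguished direction: taking $\v=(0,1)$ isolates the role of $P$, since then $P(\v)=\spe{1}{\VV 1}{\Gamma}=\norm{1}{\VV}^2>0$ by the $H^{-1/2}(\Gamma)$-ellipticity of $\VV$ (recalling the diameter assumption in two dimensions). Subtracting the original-form contribution — which for this test direction reproduces~\eqref{eq2:weakfem} tested with $\psi=1$ plus the same $P(\u)$ factor — forces $P(\u)=\spe{1}{(1/2-\KK)u_0}{\Gamma}$, so the stabilization terms match on both sides for every $\v$ and can be cancelled, leaving $\BB(\u,\v)=F(\v)$ for all $\v$. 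The main obstacle, and the only point requiring care, is precisely this converse: one must exhibit a test direction making the scalar factor $P(\v)$ nonzero so that the identity $P(\u)=\spe{1}{(1/2-\KK)u_0}{\Gamma}$ can be extracted, and the positivity $\spe{1}{\VV 1}{\Gamma}>0$ coming from ellipticity of $\VV$ is exactly what guarantees such a direction exists.
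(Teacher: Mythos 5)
Your proof is correct and is essentially the paper's own argument: the paper handles this lemma by citing the pure-diffusion equivalence result from the literature (noting that the convection and reaction terms in $\AA$ do not affect the proof), and the argument it spells out for the discrete analogue in Lemma~\ref{lem:equi:fvm} --- testing with $\v=(0,1)$ in both directions, observing $\BB(\u;(0,1))=P(\u)$ and $F((0,1))=\spe{1}{(1/2-\KK)u_0}{\Gamma}$, and using coercivity of $\VV$ to cancel the resulting factor $1+\spe{1}{\VV 1}{\Gamma}$ --- is exactly the computation you describe. One minor imprecision in your closing remark: the converse does not require a test direction with $P(\v)\neq 0$, but rather that $1+P((0,1))=1+\spe{1}{\VV 1}{\Gamma}\neq 0$, which is precisely what your positivity observation $\spe{1}{\VV 1}{\Gamma}>0$ guarantees, so the proof itself is unaffected.
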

\begin{proof}
The equivalence of formulations was stated in~\cite[Theorem 14]{wien} for
a pure diffusion problem. The convection and reaction terms in the bilinear
form $\AA(\cdot,\cdot)$ do not affect the proof. 
We note that we will see a similar result for the FVM-BEM discretization
in Lemma~\ref{lem:equi:fvm}. 
\end{proof}
The next theorem on the coercivity of the bilinear form $\widetilde{\BB}$
is an extended and improved version of the one stated in~\cite[Theorem~3.1]{Of:2013-1} 
and~\cite[Theorem~15]{wien} for a purely diffusive problem. 
We extend it by the convection and reaction terms in the bilinear form and 
present an improved ellipticity constant compared
to~\cite[Theorem~3.1]{Of:2013-1}. This is possible due to some modification of the
proof inspired by~\cite{Of:Drwp}. Before we state the theorem, we 
recall an important contractivity result for 
the double layer operator~\cite[Lemma 2.1]{Of:2013-1}
with the contraction constant $C_{\KK}$ from~\cite{Steinbach:2001-1}: 
there exists $C_{\KK}\in[1/2,1)$ such that
\begin{align}\label{boundcK}
  \norm{(1/2+\KK)v}{\VV^{-1}}^2
  \leq  C_{\KK} \spe{\VV^{-1}(1/2+\KK) v}{v}{\Gamma}.
\end{align}
Furthermore, we define
for $\beta=0$
 \begin{align}
  \label{eq:Cbc}
  C_{\b\r}&:= 
  \begin{cases}\displaystyle
    \inf_{x\in\Omega}\gamma(x)\quad&
	\text{for }\gamma(x)>0\text{ almost everywhere in }\Omega,\\[1mm]
    C(\gamma(x),\omega,\Omega)\quad&
	\text{for }\gamma(x)>0\text{ on }
	\omega\subsetneq\Omega,|\omega|>0, \gamma(x)=0 \text{ elsewhere}
    \end{cases}	
  \end{align}
  with $\gamma(x)$ from assumption~\eqref{eq:bcestimate1}
  and the unknown constant $C(\gamma(x),\omega,\Omega)>0$
  introduced in Lemma~\ref{lem:coerccont}.

\begin{theorem}
 If $\lambda_{\min}(\A)>C_{\KK}/4$, then 
$\widetilde{\BB}$ is $\HH$-elliptic. More precisely, for all $\v=(v,\psi) \in\HH$ holds
  \begin{align}\label{eq:ellipticity:konv}
    \BBstab{\v}{\v} 
    \geq C_{\rm stab}
    \left[ \norm{\nabla v}{L_2(\Omega)}^2
      + (1-\beta) \norm{v}{L_2(\Omega)}^2
      + \beta P(\v)^2 + \norm{\psi}{\VV}^2 \right].
  \end{align}
The stability constant $C_{\rm stab}$ reads 
\begin{align*}
    C_{\rm stab}  = 
    \begin{cases}
      \min  \left\{ C_{\b\r}, \frac{1}{2}  \left[  
          \lambda_{\min}(\A) +1  
          - \sqrt{ (\lambda_{\min}(\A)-1)^2+C_{\KK}}
        \right] \right\} 
      & \text{for } \beta = 0,\\[2ex]
      \min  \left\{ 1, \frac{1}{2}  \left[  
          \lambda_{\min}(\A) +1  
          - \sqrt{ (\lambda_{\min}(\A)-1)^2+C_{\KK}}
        \right] \right\} 
      & \text{for } \beta = 1
\end{cases}
\end{align*}  
and depends on the model data $\A$, $\b$, $\r$, and the contraction constant
$C_{\KK}$.
\end{theorem}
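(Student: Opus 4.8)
The plan is to start from the integration-by-parts form of $\BB(\v;\v)$ obtained above and to reduce the entire estimate to a two-dimensional quadratic form in the quantities $X:=\norm{\nabla v}{L^2(\Omega)}$ and $Y:=\norm{\psi}{\VV}$. First I would note that the two convective boundary terms are nonnegative: on $\Gamma^{in}$ one has $\b\cdot\normal<0$, hence $-\spe{\b\cdot\normal\,v}{v}{\Gamma^{in}}\ge0$, and on $\Gamma^{out}$ one has $\b\cdot\normal\ge0$, hence $\spe{\b\cdot\normal\,v}{v}{\Gamma^{out}}\ge0$; both may be dropped for a lower bound. Using $\A$-ellipticity for the principal part, $\gamma\ge0$ from~\eqref{eq:bcestimate1} to keep $\spl{\gamma v}{v}{\Omega}\ge0$ aside, the algebraic identity $(1/2-\KK)v-v=-(1/2+\KK)v$, and $\spe{\psi}{\VV\psi}{\Gamma}=\norm{\psi}{\VV}^2$, this leaves
\begin{align*}
\BB(\v;\v)\ge \lambda_{\min}(\A)\,\norm{\nabla v}{L^2(\Omega)}^2 + \spl{\gamma v}{v}{\Omega} + \norm{\psi}{\VV}^2 - \spe{\psi}{(1/2+\KK)v}{\Gamma}.
\end{align*}

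The heart of the argument is controlling the cross term, and this is where the hard part lies. I would bound it by Cauchy--Schwarz in the duality between the $\VV$- and $\VV^{-1}$-norms, $\spe{\psi}{(1/2+\KK)v}{\Gamma}\le \norm{\psi}{\VV}\,\norm{(1/2+\KK)v}{\VV^{-1}}$, then invoke the contractivity~\eqref{boundcK}, and finally the key estimate $\spe{\VV^{-1}(1/2+\KK)v}{v}{\Gamma}\le\norm{\nabla v}{L^2(\Omega)}^2$. This last inequality is the true obstacle: it identifies $\VV^{-1}(1/2+\KK)$ as the interior Steklov--Poincar\'e (Dirichlet-to-Neumann) operator, whose energy equals the Dirichlet energy of the harmonic extension of $v|_\Gamma$, which is in turn bounded by $\norm{\nabla v}{L^2(\Omega)}^2$ through the energy-minimization property of harmonic functions. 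Chaining the three estimates gives $\spe{\psi}{(1/2+\KK)v}{\Gamma}\le\sqrt{C_{\KK}}\,\norm{\nabla v}{L^2(\Omega)}\norm{\psi}{\VV}=\sqrt{C_{\KK}}\,XY$.

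At this point the gradient-plus-boundary part of $\BB(\v;\v)$ is bounded below by the quadratic form $Q(X,Y)=\lambda_{\min}(\A)X^2-\sqrt{C_{\KK}}\,XY+Y^2=(X,Y)\,M\,(X,Y)^T$ with $M=\left(\begin{smallmatrix}\lambda_{\min}(\A)&-\sqrt{C_{\KK}}/2\\ -\sqrt{C_{\KK}}/2&1\end{smallmatrix}\right)$, while $\spl{\gamma v}{v}{\Omega}\ge0$ is retained separately. The sharp constant is the smallest eigenvalue $\mu_{\min}$ of $M$, namely $\tfrac12[\lambda_{\min}(\A)+1-\sqrt{(\lambda_{\min}(\A)-1)^2+C_{\KK}}]$, giving $Q(X,Y)\ge\mu_{\min}(X^2+Y^2)$; using this sharp eigenvalue rather than a Young inequality with a free parameter is exactly what produces the improved constant. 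Positivity of $\mu_{\min}$ is equivalent to $\det M>0$, i.e. to $\lambda_{\min}(\A)>C_{\KK}/4$, which is precisely the hypothesis.

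It remains to reinstate the zeroth-order term and the stabilization according to the two values of $\beta$ from~\eqref{eq:beta}. For $\beta=0$ (so $\gamma\not\equiv0$) I would use $\spl{\gamma v}{v}{\Omega}\ge C_{\b\r}\norm{v}{L^2(\Omega)}^2$, with $C_{\b\r}$ from~\eqref{eq:Cbc} as supplied by Lemma~\ref{lem:coerccont}, and combine it with the quadratic-form bound to obtain the factor $\min\{C_{\b\r},\mu_{\min}\}$ in front of $\norm{\nabla v}{L^2(\Omega)}^2+\norm{v}{L^2(\Omega)}^2+\norm{\psi}{\VV}^2$. For $\beta=1$ (so $\gamma=0$ and the $L^2(\Omega)$-contribution is absent) the stabilization adds $\beta P(\v)^2=P(\v)^2$ to $\BBstab{\v}{\v}$, and distributing the factor gives $\min\{1,\mu_{\min}\}$ in front of $\norm{\nabla v}{L^2(\Omega)}^2+P(\v)^2+\norm{\psi}{\VV}^2$; genuine $\HH$-ellipticity in this case then follows from the standard norm equivalence $\norm{\nabla v}{L^2(\Omega)}^2+P(\v)^2\simeq\norm{v}{H^1(\Omega)}^2$ of Deny--Lions/Poincar\'e type together with $\norm{\psi}{\VV}\simeq\norm{\psi}{H^{-1/2}(\Gamma)}$. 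Matching the two cases reproduces the stated $C_{\rm stab}$, and throughout the decisive step remains the Steklov--Poincar\'e energy bound feeding the cross-term estimate.
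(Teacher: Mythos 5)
Your proposal is correct and takes essentially the same route as the paper's own proof: the same Cauchy--Schwarz/contraction chain via \eqref{boundcK}, the same identification of $\VV^{-1}(1/2+\KK)$ as the interior Steklov--Poincar\'e operator whose energy is controlled by the Dirichlet energy, and the smallest eigenvalue of the identical $2\times 2$ matrix, yielding the same $C_{\rm stab}$. The only difference is bookkeeping: the paper keeps the harmonic splitting $\norm{\nabla v}{L^2(\Omega)}^2=\norm{\nabla v_0}{L^2(\Omega)}^2+\spe{S^{\text{int}}v}{v}{\Gamma}$ as an identity, forms the quadratic form in $\big(\spe{S^{\text{int}}v}{v}{\Gamma}^{1/2},\norm{\psi}{\VV}\big)$ and recombines at the end, whereas you collapse it to the one-sided bound $\spe{S^{\text{int}}v}{v}{\Gamma}\le\norm{\nabla v}{L^2(\Omega)}^2$ up front and work directly with $\big(\norm{\nabla v}{L^2(\Omega)},\norm{\psi}{\VV}\big)$; both orderings give the identical constant.
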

\begin{remark}
  The right-hand side in~\eqref{eq:ellipticity:konv} defines an equivalent
  norm in $\HH$. While this is obvious for $\beta=0$, a simple compactness argument
   (see~\cite[Lemma 10 and (65)]{wien} for a similar argument) shows the equivalence for $\beta=1$.
  Note that we only do not know the constant $C_{\rm stab}$ explicitly
  in the second case of~\eqref{eq:Cbc}.
 \end{remark}
\begin{proof}
  The proof is in the spirit of previous 
  publications~\cite{Of:2013-1,Of:Drwp,wien} on the non-symmetric 
  FEM-BEM coupling, but extended here for the different interior model
  problem. Therefore, we only present the key points.

An element $v\in H^1(\Omega)$ can be decomposed as a sum $v=v_\Gamma+v_0$, where $v_\Gamma$ is harmonic and $v_0\in H^1_0(\Omega)$. Thus
$(\nabla v_\Gamma,\nabla w)_\Omega=0$ for all  $w\in H^1_0(\Omega)$,
which implies that
\begin{align}
\norm{\nabla v}{L^2(\Omega)}^2
&= \norm{\nabla v_0}{L^2(\Omega)}^2+\norm{\nabla v_\Gamma}{L^2(\Omega)}^2
 = \norm{\nabla v_0}{L^2(\Omega)}^2 + \spe{S^{\text{int}} v}{v}{\Gamma},
\label{eq:split}
\end{align}
where $S^{\text{int}}:=\VV^{-1}(1/2+\KK)$ 
denotes the Steklov--Poincar\'e operator, i.e., the Dirichlet to Neumann
map of the interior Laplace problem. The term 
$\spe{S^{\text{int}}v}{v}{\Gamma}$ will help to compensate possible negative
contributions of the non-symmetric coupling to the total energy of the system.
Let us first recall our choice of $\beta$ 
depending on $\gamma(x)$ in~\eqref{eq:beta} 
and the definition of $C_{\b\r}$ in~\eqref{eq:Cbc}. 
This allows us to write the coercivity estimate of Lemma~\ref{lem:coerccont} as
\begin{align*}
\AA(v,v)\ge  \lambda_{\min}(\A) \norm{\nabla v}{L^2(\Omega)}^2
  + (1-\beta) C_{\b\r} \norm{v}{L_2(\Omega)}^2 \qquad\text{for all } v\in H^1(\Omega).
\end{align*}
Following~\cite{Of:2013-1} and using~\eqref{boundcK}, we can easily estimate
\begin{align*}
  \spe{\psi}{(1/2+\KK)v}{\Gamma}
  &= \spe{\VV \psi}{ \VV^{-1}(1/2+\KK)v}{\Gamma}\\
  &\leq  \norm{\VV^{-1}
(1/2+\KK)v}{\VV} \norm{\psi}{\VV} 
  = \norm{(1/2+\KK)v}{\VV^{-1}} \norm{\psi}{\VV} \\
  &\leq C_{\KK}^{1/2}\spe{S^{\text{int}} v}{v}{\Gamma}^{1/2}\norm{\psi}{\VV}
\end{align*}
for all $(v,\psi)\in \HH$. Therefore, for all $\v=(v,\psi)\in \HH$, we can estimate
\begin{align*}
  \BBstab{\v}{\v}  &=\ 
  \AA(v,v)
  + \spe{\psi}{\VV\psi}{\Gamma}
  - \spe{\psi}{(1/2+\KK)u}{\Gamma}+ \beta P(\v)^2\\  
  &\geq
  \lambda_{\min}(\A) \norm{\nabla v}{L_2(\Omega)}^2
  + (1-\beta) C_{\b\r} \norm{v}{L_2(\Omega)}^2
  + \beta P(\v)^2
  + \norm{\psi}{\VV}^2
   - C_{\KK}^{1/2}\spe{S^{\text{int}} v}{v}{\Gamma}^{1/2}\norm{\psi}{\VV}\\
  & \geq
  \lambda_{\min}(\A) \norm{\nabla v_0}{L_2(\Omega)}^2
  + (1-\beta) C_{\b\r} \norm{v}{L_2(\Omega)}^2
  + \beta P(\v)^2 \\ 
   &\phantom{=} + \begin{pmatrix}
    \spe{S^\text{int} v}{v}{\Gamma}^{1/2}\\
    \norm{\psi}{\VV}
  \end{pmatrix}^\top
  \begin{pmatrix}
    \lambda_{\min}(\A) & -\frac{1}{2} \sqrt{C_{\KK}}\\
    -\frac{1}{2} \sqrt{C_{\KK}} & 1
  \end{pmatrix}
  \begin{pmatrix}
    \spe{S^\text{int} v}{v}{\Gamma}^{1/2}\\
    \norm{\psi}{\VV}
  \end{pmatrix},
\end{align*}
where in the last inequality we have used the harmonic splitting~\eqref{eq:split}.
Since $\lambda_{\min}(\A) >0$, the quadratic form in the right-hand side of the 
above estimate is positive definite if and only if
\begin{align*}
  \begin{vmatrix}
  \lambda_{\min}(\A) & -\frac{1}{2} \sqrt{C_{\KK}}\\
 -\frac{1}{2} \sqrt{C_{\KK}} & 1
\end{vmatrix}
= \lambda_{\min}(\A) - \frac{1}{4} C_{\KK} > 0.
\end{align*}
Calculating
the smallest eigenvalue of the matrix above, we can bound
\begin{align*}
  \BBstab{\v}{\v} \geq C_{\rm stab}\Big(
  &
  \norm{\nabla v_0}{L_2(\Omega)}^2
  + \spe{S^{\text{int}} v}{v}{\Gamma}
  + (1-\beta) \norm{v}{L_2(\Omega)}^2\\
  & 
  + \beta P(\v)^2 + \norm{\psi}{\VV}^2
  \Big),
\end{align*}
which, using~\eqref{eq:split}, is the estimate of the statement of the theorem. 
\end{proof}
\begin{remark}
  Note that this result also improves the estimate
  of~\cite[Theorem 3.1]{Of:2013-1} for a pure diffusion problem in $\Omega$.
  The smallest eigenvalue in $C_{\rm stab}$ in the 
  case $\beta=1$ is observed to be
  sharp in the numerical experiments  of~\cite{Of:2013-1}, contrary to the constant
  reported therein. 
\end{remark}
Using the boundedness of $\AA$ in~\eqref{eq:bilinearintcont} and 
mapping properties of the integral operators, it is easy to conclude that the bilinear form $\widetilde{\BB}$ defined in~\eqref{bbtilde} and the linear form $\widetilde F$ 
in~\eqref{ftilde} are bounded. Thus we can conclude the
unique solvability of~\eqref{eq:FEMBEMstab}. Due to the equivalence of the
formulations in Lemma~\ref{lem:equi:fem}, the original variational formulation
\eqref{eq1:weakfem}-\eqref{eq2:weakfem} is uniquely solvable. 
\begin{remark}
\label{rem:discellipticity}
Note that the equivalence of~\eqref{eq:FEMBEMstab}
and~\eqref{eq:FEMBEM} shown in Lemma~\ref{lem:equi:fem}
and the ellipticity estimate~\eqref{eq:ellipticity:konv}
also hold true on the discrete level,
if the constants are in the discretization space of $H^{-1/2}(\Gamma)$.
In other words, a possible FEM-BEM coupling solution, as shown
in Remark~\ref{rem:FEMBEM}, exists and is unique
and the C\'ea Lemma applies.
\end{remark}
%
\section{A non-symmetric FVM-BEM coupling}
In this section we develop a FVM-BEM coupling discretization in the sense of
a non-symmetric coupling approach. From now on we assume $t_0\in L^2(\Gamma)$.
First, let us introduce the notation for the triangulation and some discrete function
spaces.
\subsection{Triangulation}
\label{subsec:triangulation}
Throughout, $\TT$ denotes a triangulation or primal mesh of $\Omega$, and
$\NN$ and $\EE$ are the corresponding set of nodes and edges/faces, respectively.
The elements $T\in\TT$ are non-degenerate triangles (2-D case) or tetrahedra (3-D case),
and considered to be closed.
For the Euclidean diameter of $T\in\TT$ we write 
$h_T:=\sup_{x,y\in T}|x-y|$.
Moreover, $h_E$ denotes the length of an edge or Euclidean diameter of $E\in\EE$.
The triangulation is regular
in the sense of Ciarlet~\cite{Ciarlet:1978-book},
i.e., the ratio of the diameter $h_T$ of any element $T\in\TT$ to the diameter
of its largest inscribed ball is bounded by a constant independent
of $h_T$, the so called shape-regularity constant. 
Additionally, we assume that the triangulation $\TT$ is aligned 
with the discontinuities
of the coefficients $\A$, $\b$, and $\r$ of the differential equation (if any), the data
$f$, $u_0$, and $t_0$. Throughout, if $\normal$ appears in a boundary
integral, it denotes the unit normal vector to the boundary pointing outward the
domain.
We denote by $\EEt\subset\EE$ the set of all edges/faces of $T$, i.e.,
$\EEt:=\set{E\in\EE}{E\subset \partial T}$ and by
$\EEr:=\set{E\in\EE}{E\subset\Gamma}$ the set of 
all edges/faces on the boundary $\Gamma$.
%
%
%
%
%
\begin{figure}[t]
  \centering
  \psfrag{v1}[c][c]{\small $V_{1}$}\psfrag{v2}[c][c]{\small $V_{2}$}
  \psfrag{v3}[c][c]{\small $V_{3}$}
  \psfrag{v4}[c][c]{\small $V_{4}$}\psfrag{v5}[c][c]{\small $V_{5}$}
  \psfrag{v6}[c][c]{\small $V_{6}$}
  \psfrag{v7}[c][c]{\small $V_{7}$}
  \psfrag{a1}[c][c]{\small $a_{1}$}\psfrag{a2}[c][c]{\small $a_{2}$}
  \psfrag{a3}[c][c]{\small $a_{3}$}
  \psfrag{a4}[c][c]{\small $a_{4}$}\psfrag{a5}[c][c]{\small $a_{5}$}
  \psfrag{a6}[c][c]{\small $a_{6}$}
  \psfrag{a7}[c][c]{\small $a_{7}$}  
  \subfigure[Constructions of $\TT^*$.]
  {\label{fig:dualconst}\includegraphics[height=0.4\textwidth]{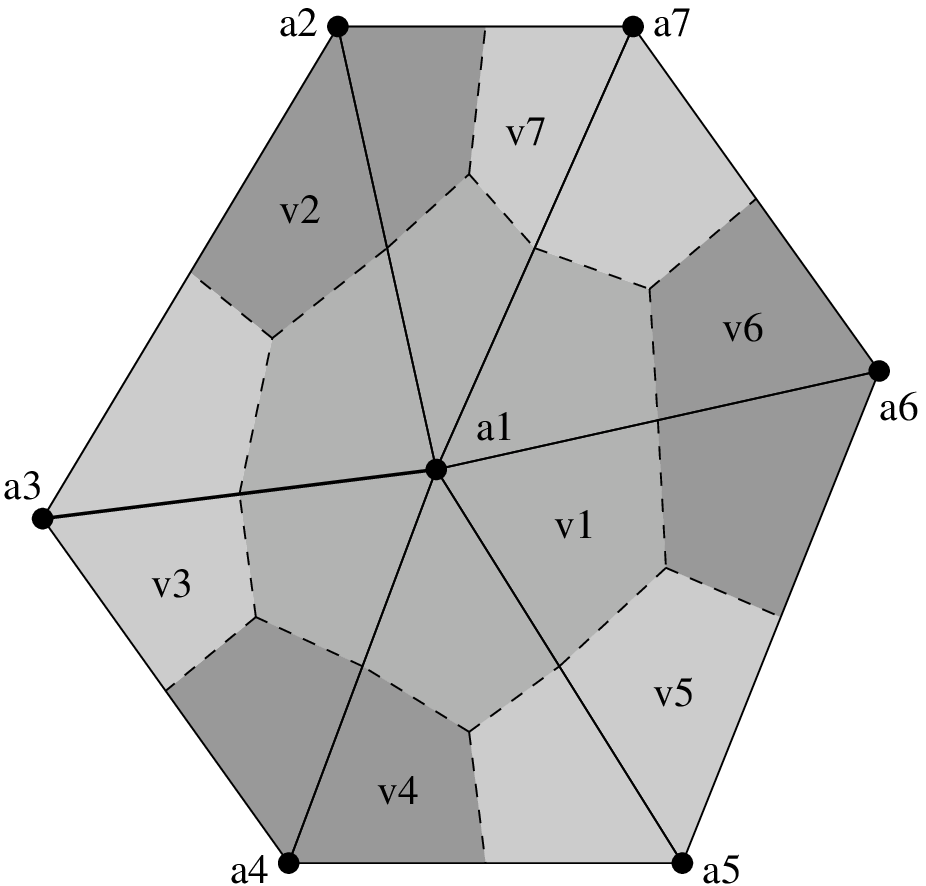}}
  \hspace*{0.1\textwidth}
  \psfrag{t17}[c][c]{\small $\tau_{17}$}
  \psfrag{t34}[c][c]{\small $\tau_{34}$}
  \subfigure[Edges for upwinding.]
  {\label{fig:edgeupwind}\includegraphics[height=0.4\textwidth]{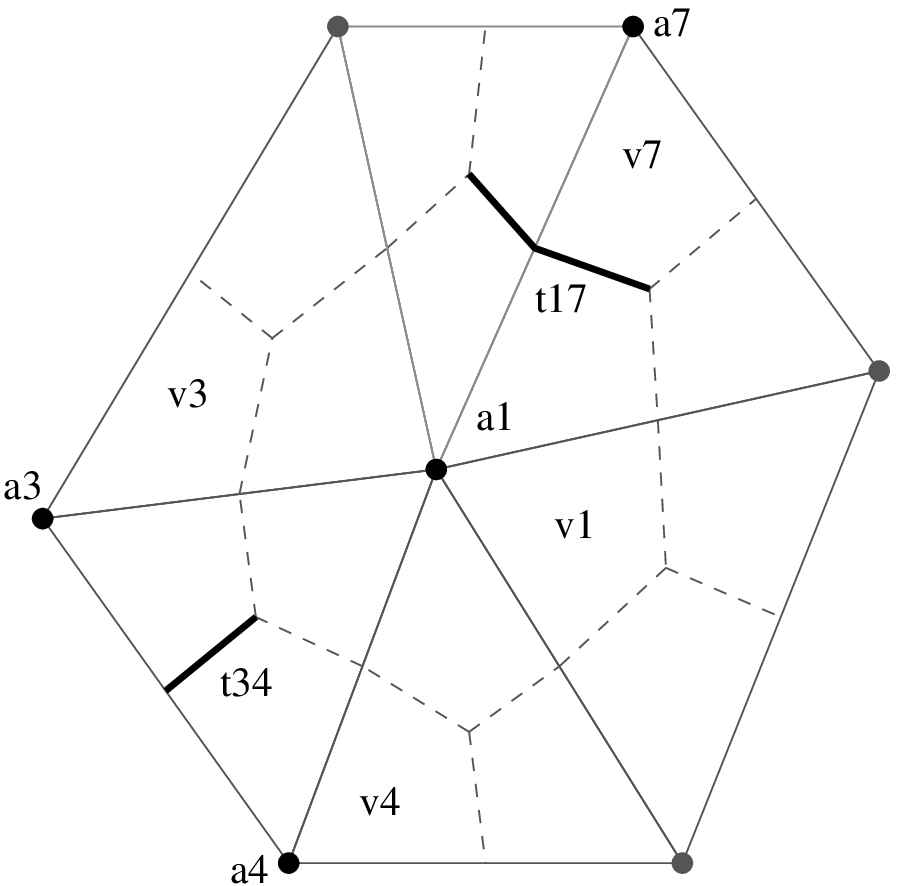}}
  \caption{
  The construction of the dual mesh $\TT^*$ from the
  primal mesh $\TT$ in two dimensions with the center of gravity point in
  the interior of the elements in Figure~\subref{fig:dualconst}; 
  the dashed lines (gray boxes) are the new control volumes $V_i$ of $\TT^*$ and are 
  associated with $a_i\in\NN$.
  In~Figure~\subref{fig:edgeupwind} we see an example
  intersection $\tau_{17}=V_1\cap V_7\neq\emptyset$ of two neighboring cells
  $V_1,V_7 \in \TT^*$, where $\tau_{17}$ is the union of 
  two straight segments. For $a_3, a_4\in \NN$, where both $a_3$ and $a_4$ 
  lie on $\Gamma$, $\tau_{34}=V_3\cap V_4\neq\emptyset$ 
  is only a single segment.
  \label{fig:primaldual}}
\end{figure}
\subsection*{Dual mesh}
We construct the dual mesh $\TT^*$ from the primal mesh $\TT$ as follows.
In two dimensions we connect the center of gravity of an element $T\in\TT$ 
with the midpoint of the edges $E\in\EEt$; see 
Figure~\ref{fig:primaldual}\subref{fig:dualconst}, where 
the dashed lines are the new boxes, called control volumes. 
In three dimensions we connect the center of gravity of an element
$T\in\TT$ with the centers of gravity of the four faces $E\in\EEt$.
Furthermore, each center of gravity of a face $E\in\EEt$ is connected by straight lines
to the midpoints of its edges. The elements of this dual mesh $\TT^*$ are taken to be closed. Note that they are non-degenerate domains because of the non-degeneracy of the elements of the primal mesh.
Given a vertex $a_i\in\NN$ from the primal mesh $\TT$ ($i=1\ldots \#\NN$), there exists a unique box containing $a_i$. We thus number the elements of the dual mesh $V_i\in \TT^*$, following the numbering of vertices.
\begin{remark}
	In two dimensions, instead of starting the construction of the boxes in the center of gravity, we can use the center
	of the circle circumscribed to the element. Connecting these 
	points with the midpoints of the edges we form the so called Voronoi or perpendicular
	bisector meshes, since the connection between to neighbor's circumscribed circle points
	is perpendicular to the shared edge. 
	Our analysis works with such meshes as well.
\end{remark}
%
%
\subsection*{Discrete function spaces}
We define with
$\SS^1(\TT):=\set{v\in\CC(\Omega)}{v|_T \text{ affine for all } T\in\TT}$
the piecewise affine and globally continuous function space 
on $\TT$.
The space $\PP^0(\EEr)$ is the $\EEr$-piecewise constant function space. 
On the dual mesh $\TT^*$ we provide
$\PP^0(\TT^*):=\set{v\in L^2(\Omega)}{v|_V \text{ constant } V\in\TT^*}$.
With the aid of the characteristic function $\chi_i^*$ over the volume $V_i$ 
we write for $v_h^*\in\PP^0(\TT^*)$
\begin{align*}
  v_h^*=\sum_{x_i\in\NN}v_i^*\chi_i^*,
\end{align*}
with real coefficients $v_i$.
Furthermore, we define the $\TT^*$-piecewise constant interpolation operator
\begin{align}
  \label{eq:intoppiecewise}
  \IIh^*:\CC(\overline\Omega)\to\PP^0(\TT^*),\quad
  \IIh^*v:=\sum_{a_i\in\NN}v(a_i)\chi_i^*(x).
\end{align}
Because of the construction of the dual mesh from the primal mesh and the definition of $\IIh^*$
there hold the well known results:
\begin{lemma}
  \label{lem:proplindual}
  Let $T\in\TT$ and $E\in\EEt$. 
  For $v_h\in\SS^1(\TT)$ there holds
  \begin{align}
    \label{eq2:proplindual}
    &\int_E(v_h-\IIh^*v_h)\,ds=0,\\
    \label{eq3:proplindual}
    &\norm{v_h-\IIh^*v_h}{L^2(T)}\leq h_T \norm{\nabla v_h}{L^2(T)},\\
    \label{eq4:proplindual}
    &\norm{v_h-\IIh^*v_h}{L^2(E)}\leq C h_E^{1/2} \norm{\nabla v_h}{L^2(T)},
  \end{align}
  where the constants $C>0$ depend only on the shape regularity constant.
\end{lemma}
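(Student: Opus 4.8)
The plan is to reduce all three estimates to one exact algebraic identity. On the intersection $T\cap V_i$ of a primal element $T\in\TT$ with the dual box $V_i$ around the vertex $a_i\in\NN$, the interpolant equals the constant $\IIh^*v_h=v_h(a_i)$, whereas $v_h$ is affine on $T$ with constant gradient. Hence
\begin{align*}
	(v_h-\IIh^*v_h)(x)=\nabla v_h\cdot(x-a_i)\qquad\text{for }x\in T\cap V_i .
\end{align*}
Every statement then becomes a matter of estimating the geometric factor $|x-a_i|$ or of measuring the dual pieces of $E$.

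For the orthogonality relation~\eqref{eq2:proplindual} I would use that the dual construction (joining the centroid of $E$ to the midpoints of its edges, i.e. splitting each edge into two halves in two dimensions and each triangular face into three quadrilaterals in three dimensions) partitions $E$ into $n$ pieces of equal measure $|E|/n$, one per vertex of $E$, where $n=2$ for $d=2$ and $n=3$ for $d=3$. Thus $\int_E\IIh^*v_h\,ds=(|E|/n)\sum_i v_h(a_i)$, the sum running over the vertices of $E$. On the other hand, since $v_h$ is affine on $E$, its mean over $E$ is its value at the barycenter, so $\int_E v_h\,ds=|E|\,\tfrac1n\sum_i v_h(a_i)$. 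The two coincide, giving~\eqref{eq2:proplindual}.

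For the interior bound~\eqref{eq3:proplindual} I would combine the exact identity with the crude estimate $|x-a_i|\le h_T$, valid because both $x$ and $a_i$ lie in $T$; this yields $|(v_h-\IIh^*v_h)(x)|\le h_T\,|\nabla v_h|$ on each $T\cap V_i$. Squaring, integrating over $T$, and using that $|\nabla v_h|$ is constant gives $\norm{v_h-\IIh^*v_h}{L^2(T)}^2\le h_T^2|\nabla v_h|^2|T|=h_T^2\norm{\nabla v_h}{L^2(T)}^2$, which is~\eqref{eq3:proplindual}. For the trace bound~\eqref{eq4:proplindual} the same identity applies, but now $x$ ranges over $E$, and the only boxes meeting $E$ belong to the vertices of $E$; hence the relevant $a_i$ is a vertex of $E$ and $|x-a_i|\le h_E$. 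This produces $\norm{v_h-\IIh^*v_h}{L^2(E)}^2\le h_E^2|\nabla v_h|^2|E|$. The one place where shape regularity enters is the conversion of $|E|$ into the volume $|T|$: from $|E|\le C h_E^{d-1}$ and $|T|\ge c\,h_T^d\ge c\,h_E^d$ one gets $|E|\le (C/c)h_E^{-1}|T|$, whence $\norm{v_h-\IIh^*v_h}{L^2(E)}^2\le C'h_E\norm{\nabla v_h}{L^2(T)}^2$, i.e.~\eqref{eq4:proplindual}.

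I expect the main (and only genuine) obstacle to be the equal-measure decomposition used in~\eqref{eq2:proplindual}: one must verify that the centroid-based dual construction really partitions each edge into two equal halves and each triangular face into three equal-area quadrilaterals, so that $|E\cap V_i|=|E|/n$ for every incident vertex. This is a purely geometric, affinely invariant fact, and it is precisely what makes the averaging argument exact rather than merely approximate; the remaining estimates are then elementary.
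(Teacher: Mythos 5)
Your proposal is correct and follows essentially the same route the paper gestures at: the paper's proof is a citation of standard results that singles out precisely the midpoint/centroid construction you verify (the equal-measure splitting of $E$, which is indeed affinely invariant and checkable on a reference element) as the key to \eqref{eq2:proplindual}, and your pointwise identity $(v_h-\IIh^*v_h)(x)=\nabla v_h\cdot(x-a_i)$ on $T\cap V_i$ is the standard way to obtain \eqref{eq3:proplindual} with constant $1$. The only (harmless) deviation concerns \eqref{eq4:proplindual}: the paper derives it from \eqref{eq3:proplindual} via a scaled trace inequality, whereas you prove it directly from the same identity combined with the shape-regularity bound $|E|\lesssim h_E^{-1}|T|$ — both arguments are equally elementary and yield the same dependence on the shape-regularity constant.
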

\begin{proof}
The proofs are standard. Note that for~\eqref{eq2:proplindual} we need the fact, 
that the dual mesh $\TT^*$ is constructed through the midpoint of an edge
$E\in\EE$ in the two dimensional case and the center of gravity point if
$E$ is a face in the three dimensional case.
A proof of~\eqref{eq3:proplindual} can be found in~\cite{Erath:2010-phd},
and~\eqref{eq4:proplindual} follows from~\eqref{eq3:proplindual} through
the standard trace inequality. Note that the above statements are
independent of the choice of the interior point in $T\in\TT$ 
for the $\TT^*$ construction.
\end{proof}
\subsection{The discrete system}
A classical finite volume discretization describes numerically a 
conservation law of the model problem, i.e., a quantity
in a volume can only change due to the inflow and outflow flux balance
through its boundary. More precisely, for our model problem we 
integrate~\eqref{eq1:model} over each dual control volume
$V\in\TT^*$ and apply the divergence theorem. If we use the transmission 
condition~\eqref{eq5:model}--\eqref{eq6:model} we thus get a 
balance equation for the interior
problem
\begin{align}
	\label{eq:FVMV}
	\begin{split}
	\int_{\partial V\backslash\Gamma}(-\A\nabla u_h&+\b u_h)\cdot\normal\,ds
	+\int_V\r u_h\,dx \\
	&+\int_{\partial V\cap \Gamma^{out}}\b\cdot\normal\, u_h\,ds
	-\int_{\partial V\cap\Gamma}\phi_h\,ds=\int_V f\,dx+\int_{\partial V\cap \Gamma}t_0\,ds
	\end{split}
\end{align}
for all $V\in\TT^*$. 
Note that the discretization in the interior domain follows 
along the dual mesh $\TT^*$.
Here, $u_h\in\SS^1(\TT)$ and $\phi_h\in\PP^0(\EEr)$
approximate $u$ and $\phi$, respectively. 
We can rewrite~\eqref{eq:FVMV} in terms of
a variational formulation;
\begin{align*}
\AA_V(u_h,v_h)-\spe{\phi_h}{\IIh^*v_h}{\Gamma}&=
\spl{f}{\IIh^*v_h}{\Omega}+\spe{t_0}{\IIh^*v_h}{\Gamma}	
\end{align*}
with the finite volume bilinear form $\AA_V:\SS^1(\TT)\times \SS^1(\TT)\to \mathbb R$ given by
\begin{align}
  \label{eq:fvmbilinear}
  \begin{split}
  \AA_V(u_h,v_h)&:=\sum_{a_i\in\NN}v_h(a_i)\bigg(
  \int_{\partial V_i\backslash\Gamma}(-\A \nabla u_h+\b u_h ) \cdot \normal\,ds\\
  &\qquad\qquad\quad+\int_{V_i}\r u_h\,dx 
  +\int_{\partial V_i\cap\Gamma^{out}}
	\b\cdot\normal\,u_h\,ds\bigg).
	\end{split}
\end{align}
 
\begin{remark}
Note that the trial and test spaces are different in practice.
The test functions in the finite volume part are in $\PP^0(\TT^*)$, which is
realized by taking nodal values $v_h(a_i)$ in~\eqref{eq:fvmbilinear} and by interpolation 
$\IIh^*v_h\in\PP^0(\TT^*)$ for $v_h\in\SS^1(\TT)$.
We have chosen the above definition to simplify the notation below.
\end{remark}
%
%
%
To complete the coupling formulation we choose as in the classical 
non-symmetric FEM-BEM formulation the BEM equation~\eqref{eq:cal1}
and replace the continuous ansatz and test spaces by 
discrete subspaces.
Finally, the discrete system reads:
find $u_h\in\SS^1(\TT)$ and $\phi_h\in\PP^0(\EEr)$ such that
\begin{subequations}
\begin{align}
\label{eq1:fvm}
  \AA_V(u_h,v_h)-\spe{\phi_h}{\IIh^*v_h}{\Gamma}&=\spl{f}{\IIh^*v_h}{\Omega}
  +\spe{t_0}{\IIh^*v_h}{\Gamma},\\
  \label{eq2:fvm}   
  \spe{\psi_h}{(1/2-\KK)u_h}{\Gamma}+\spe{\psi_h}{\VV\phi_h}{\Gamma}
  &=\spe{\psi_h}{(1/2-\KK)u_0}{\Gamma}	
\end{align}
\end{subequations}
for all $v_h\in\SS^1(\TT)$, $\psi_h\in\PP^0(\EEr)$.

As in the continuous case we write the system in a more compact way. We consider the product space $\HH_h:=\SS^1(\TT)\times \PP^0(\EEr)$, the bilinear form $\BB_V:\HH_h\times \HH_h \to \mathbb R$
\begin{align*}
	\BB_V((w_h,\phi_h);(v_h,\psi_h))
	:=&\AA_V(w_h,v_h)-\spe{\phi_h}{\IIh^*v_h}{\Gamma} \\
          &+\spe{\psi_h}{(1/2-\KK)w_h}{\Gamma}+\spe{\psi_h}{\VV\phi_h}{\Gamma},
\end{align*}
and the linear functional $F_V:\HH_h \to \mathbb R$
\begin{align}
	\label{eq:Ffvm}
	F_V((v_h,\psi_h)):=\spl{f}{\IIh^* v_h}{\Omega}+\spe{t_0}{\IIh^*v_h}{\Gamma}+\spe{\psi_h}{(1/2-\KK)u_0}{\Gamma}.  
\end{align}
The~\eqref{eq1:fvm}-\eqref{eq2:fvm} is equivalent to: find $\u_h\in\HH_h$ such that
\begin{align}
   \label{eq:FVMBEM}
   \BB_V(\u_h;\v_h)=F_V(\v_h) \qquad\text{for all } \v_h\in \HH_h.
\end{align}
%
\subsection{Upwind scheme}
\label{subsec:upwind}

In general it is a non trivial task to get a stable discrete solution
for convection dominated problems.
Finite volume schemes, however, allow an
easy upwind stabilization~\cite{Roos:1996-book}. 
If we want to apply an upwind scheme for the finite volume scheme,
we replace $\b u_h$ 
on the interior dual edges/faces $V_i\backslash \Gamma$
in $\AA_V$~\eqref{eq:fvmbilinear} by an upwinded approximation. Given $V_i\in \TT^*$, we consider the intersections with the neighboring cells 
$\tau_{ij}=V_i\cap V_j\neq\emptyset$ for $V_j \in \TT^*$. 
Note that in two dimensions $\tau_{ij}$ is the union of two straight segments or 
(when the associated vertices $a_i, a_j\in \NN$ lie on $\Gamma$) a single segment; see Figure~\ref{fig:primaldual}\subref{fig:edgeupwind}. 
In three dimensions $\tau_{ij}$ consists of one or two polygonal surfaces.
We then compute the averages
\begin{align*}
\beta_{ij}:=\frac1{|\tau_{ij}|}\int_{\tau_{ij}}\b\cdot\normal_i\,ds,
\qquad
\A_{ij} :=\frac1{|\tau_{ij}|} \int_{\tau_{ij}} \A \,ds,
\end{align*}
where $\normal_i$ points outward with respect to $V_i$, and
the parameter
\begin{align*}
\lambda_{ij}:=\Phi( \beta_{ij}|\tau_{ij}|/ \| \A_{ij}\|_\infty),
\end{align*}
for a weight function $\Phi:\mathbb R\to [0,1]$, which is being applied to the P\'eclet number. Then we consider the value
\begin{align*}
u_{h,ij}:=\lambda_{ij}u_h(a_i)+(1-\lambda_{ij})u_h(a_j)
\end{align*}
instead of $u_h$ when restricted to $\tau_{ij}\subset \partial V_i\setminus\Gamma$.  
In this work we choose 
the upwind value defined by the classical (full) upwind scheme by
\begin{align}
  \label{eq:weightfunctionfull}
\Phi(t):=(\operatorname{sign}(t)+1)/2,
\end{align}
i.e. $\lambda_{ij}=1$ for
$\beta_{ij}\geq 0$ and $\lambda_{ij}=0$ otherwise.
A second choice will be
\begin{align}
  \label{eq:weightfunction}
  \Phi(t):=\begin{cases}
   \displaystyle
    \min\big\{2|t|^{-1},1\big\}/2 \quad &\text{for } t<0,\\[1mm]
   \displaystyle 1-\min\big\{2|t|^{-1},1\big\}/2 \quad &\text{for } t\geq 0,
  \end{cases}
\end{align}
where we can steer the amount of upwinding
to reduce the excessive numerical diffusion.
Whenever we apply an upwind scheme for the convection part, we
replace the finite volume bilinear form $\AA_V$ in the
system~\eqref{eq1:fvm}--\eqref{eq2:fvm} by
\begin{align}
  \label{eq:fvmbilinearup}
  \begin{split}
  \AA_V^{up}(u_h,v_h)&:=\sum_{a_i\in\NN}v_h(a_i)\bigg(
  \int_{\partial V_i\backslash\Gamma}-\A \nabla u_h \cdot \normal\,ds
  +\int_{V_i}\r u_h\,dx \\
  &\qquad\qquad\quad
  +\sum_{j\in\NN_i}\int_{\tau_{ij}}\b \cdot\normal \,u_{h,ij}\,ds
  +\int_{\partial V_i\cap\Gamma^{out}}
	\b\cdot\normal\,u_h\,ds\bigg),
	\end{split}
\end{align}
where $\NN_i$ denotes the index set of nodes in $\TT$ 
of all neighbors of $a_i \in\NN$.
\section{Stability and convergence}\label{sec:stab}
In this section we want to introduce a stabilized FVM-BEM coupling
version of~\eqref{eq:FVMBEM} for analysis purposes only. 
As in~\eqref{eq:FEMBEMstab} 
we use the ``implicit theoretical'' stabilization of~\cite{wien}.

Similar as above we define 
 $\widetilde{\BB}_V:\HH_h\times\HH_h\to\R$ and $\widetilde F_V:\HH_h\to \mathbb R$ by
\begin{align}
\label{eq:Bstabfvm}
  \BBVstab{\u_h}{\v_h} &:=    
  	\BB_V(\u_h;\v_h)
	+ \beta P(\u_h) P(\v_h),\\
	\widetilde F_V(\v_h) &:= F_V(\v_h)+\beta\spe{1}{(1/2-\KK) u_0}{\Gamma}
		P(\v_h).
\end{align}
Then the stabilized FVM-BEM coupling reads:
find $\u_h\in\HH_h$, such that
\begin{align}
	\label{eq:FVMBEMstab}
	\BBVstab{\u_h}{\v_h}=\widetilde F_V(\v_h)
	\qquad \text{for all } \v_h\in \HH_h.
\end{align}
%
%
%
\begin{remark}
\label{rem:FEMBEM}
The discretized version of the stabilized FEM-BEM coupling reads with the 
stabilized weak form~\eqref{eq:FEMBEMstab}: find $\u_{h,FEM}\in\HH_h$ such that
\begin{align*}
\BBstab{\u_{h,FEM}}{\v_h}=\widetilde F(\v_h)
\qquad\text{for all } \v_h\in \HH_h.
\end{align*}
See also Remark~\ref{rem:discellipticity}.
\end{remark}
In the spirit of Lemma~\ref{lem:equi:fem} and~\cite[Theorem~14]{wien} we can state the
equivalence of the two presented FVM-BEM formulations.
\begin{lemma}\label{lem:equi:fvm}
The FVM-BEM coupling~\eqref{eq:FVMBEM}	
and its stabilization~\eqref{eq:FVMBEMstab}
are equivalent. 
The statement is also true if we replace 
$\AA_V $ by $\AA_V^{up} $ in the corresponding
bilinear forms.
\end{lemma}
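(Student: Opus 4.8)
The plan is to prove equivalence by showing that the solution sets of~\eqref{eq:FVMBEM} and~\eqref{eq:FVMBEMstab} in $\HH_h$ coincide, and the whole argument hinges on the single observation that the constant function $1$ belongs to $\PP^0(\EEr)$ (cf. Remark~\ref{rem:discellipticity}). When $\beta=0$ the two formulations are literally identical, since then $\BBVstab{\cdot}{\cdot}=\BB_V(\cdot;\cdot)$ and $\widetilde F_V=F_V$; so only the case $\beta=1$ needs work. The central quantity is the defect
\[
\delta := P(\u_h)-\spe{1}{(1/2-\KK)u_0}{\Gamma},
\]
and I would show $\delta=0$ for any solution $\u_h=(u_h,\phi_h)$ of either system. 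Once $\delta=0$ is known, the two stabilizing contributions $\beta P(\u_h)P(\v_h)$ and $\beta\spe{1}{(1/2-\KK)u_0}{\Gamma}P(\v_h)$ agree for every test $\v_h$, so they cancel and the stabilized and unstabilized equations coincide.

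For the forward direction, suppose $\u_h$ solves~\eqref{eq:FVMBEM}, equivalently~\eqref{eq1:fvm}--\eqref{eq2:fvm}. I would test the boundary equation~\eqref{eq2:fvm} with the admissible choice $\psi_h=1\in\PP^0(\EEr)$: its left-hand side becomes exactly $\spe{1}{(1/2-\KK)u_h}{\Gamma}+\spe{1}{\VV\phi_h}{\Gamma}=P(\u_h)$ and its right-hand side is $\spe{1}{(1/2-\KK)u_0}{\Gamma}$, so $\delta=0$ at once. Adding the now-identical stabilization to both sides of~\eqref{eq:FVMBEM} then turns it into~\eqref{eq:FVMBEMstab}, so $\u_h$ solves the stabilized system.

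For the converse, suppose $\u_h$ solves~\eqref{eq:FVMBEMstab}. Subtracting $F_V$ gives $\BB_V(\u_h;\v_h)-F_V(\v_h)=-\beta\delta\,P(\v_h)$ for all $\v_h$. I would test with $v_h=0$, $\psi_h=1$: the interior contributions vanish (since $\IIh^*0=0$ and $\AA_V(u_h,0)=0$), so the left-hand side collapses to the boundary residual $\spe{1}{(1/2-\KK)u_h}{\Gamma}+\spe{1}{\VV\phi_h}{\Gamma}-\spe{1}{(1/2-\KK)u_0}{\Gamma}=\delta$, while $P((0,1))=\spe{1}{\VV 1}{\Gamma}=\norm{1}{\VV}^2$. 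This yields $\delta=-\beta\delta\,\norm{1}{\VV}^2$, i.e. $\delta\,(1+\beta\norm{1}{\VV}^2)=0$. Because $\VV$ is $H^{-1/2}(\Gamma)$-elliptic we have $\norm{1}{\VV}^2>0$, the factor is strictly positive, and hence $\delta=0$; as before the stabilizing terms cancel and $\u_h$ solves~\eqref{eq:FVMBEM}.

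Finally, the upwind variant requires no separate argument: replacing $\AA_V$ by $\AA_V^{up}$ modifies only the interior (first-component) part of $\BB_V$, whereas $\delta=0$ is extracted purely from the boundary equation by testing with $v_h=0$, $\psi_h=1$, and both that equation and the functional $P$ are untouched by the upwind modification. Hence the two implications carry over verbatim. The only genuine ingredient, and the point one must not overlook, is that the constant lies in $\PP^0(\EEr)$ together with the strict positivity $\norm{1}{\VV}^2>0$ coming from ellipticity of $\VV$; everything else is routine bookkeeping, exactly as in Lemma~\ref{lem:equi:fem} and~\cite[Theorem~14]{wien}.
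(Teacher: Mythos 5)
Your proof is correct and follows essentially the same route as the paper: the case $\beta=0$ is trivial, and for $\beta=1$ both directions test with $\v_h=(0,1)$, using $H^{-1/2}(\Gamma)$-ellipticity of $\VV$ (i.e. $\spe{1}{\VV 1}{\Gamma}>0$) to extract $P(\u_h)=\spe{1}{(1/2-\KK)u_0}{\Gamma}$ from the stabilized system, which is exactly the paper's factored identity $P(\u_h)\bigl(1+\spe{1}{\VV 1}{\Gamma}\bigr)=\spe{1}{(1/2-\KK)u_0}{\Gamma}\bigl(1+\spe{1}{\VV 1}{\Gamma}\bigr)$ written in terms of your defect $\delta$. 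Your closing observation that the argument never touches the interior bilinear form, and hence covers $\AA_V^{up}$ verbatim, is also the paper's reasoning, just stated slightly more explicitly.
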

\begin{proof}
In case of $\beta= 0$ the two formulations are obviously the same. Thus we only have to
consider $\beta= 1$. If $\u_h=(u_h,\phi_h)$ is a solution of~\eqref{eq:FVMBEM}, testing with $\v_h=(0,1)$ it follows that
\begin{align}\label{Pu=}
P(\u_h)=  \spe{1}{(1/2-\KK) u_h + \VV \phi_h}{\Gamma} = \spe{1}{(1/2-\KK) u_0}{\Gamma},
\end{align}
which means that we can add the stabilization term to~\eqref{eq:FVMBEM} to get the
stabilized version~\eqref{eq:FVMBEMstab}. Reciprocally, testing~\eqref{eq:FVMBEMstab} with $\v_h=(0,1)$, it follows that
\begin{align*}
  P(\u_h)
  (1+\spe{1}{\VV 1}{\Gamma}) = \spe{1}{(1/2-\KK) u_0}{\Gamma}
  (1+\spe{1}{\VV 1}{\Gamma}).
\end{align*}
Since the single layer operator is coercive,~\eqref{Pu=} follows and we can eliminate the $\beta$-dependent term in~\eqref{eq:FVMBEMstab} to get~\eqref{eq:FVMBEM}. Note that the proof is independent of the particular 
choice of the finite volume bilinear form, and it therefore holds for $\AA_V^{up}$ as well.
\end{proof}
The idea of our analysis is to estimate the difference of
the stabilized FEM-BEM coupling and
the stabilized FVM-BEM coupling. For that we need
the following two estimates, which are standard in the context of 
FVM~\cite{Ewing:2002-1,Chatzipantelidis:2002-1} with the
above constructed dual mesh, but here extended to the coupling problem.
\begin{lemma}
  \label{lem:fcompare}
For the difference of the right-hand side of~\eqref{eq:FEMBEMstab}
  and~\eqref{eq:FVMBEMstab}, there holds
  \begin{align}
   \label{eq:fcompare}
   \begin{split}
   |F(\v_h)-F_V(\v_h)|
   &\leq C \Big(\sum_{T\in\TT}h_T\norm{f}{L^2(T)}\norm{\nabla v_h}{L^2(T)}\\
    &\qquad +\sum_{E\in\EEr}h_E^{1/2}\norm{t_0-\overline{t}_0}{L^2(E)}
			   \norm{\nabla v_h}{L^2(T_E)}\Big)
   \end{split}
  \end{align} 
  for all $\v_h=(v_h,\psi_h)\in\HH_h$ with a constant $C>0$, 
  which depends only on the shape regularity constant.
  Here, $\overline{t}_0$ is the $\EEr$-piecewise integral mean of $t_0$ and $T_E$
  is the element associated with $E$.
\end{lemma}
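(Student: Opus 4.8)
The plan is to expand the difference $F(\v_h) - F_V(\v_h)$ using the definitions~\eqref{eq:Ffem} and~\eqref{eq:Ffvm}. The boundary integral term $\spe{\psi_h}{(1/2-\KK)u_0}{\Gamma}$ appears identically in both linear functionals and cancels, as does the $\beta$-dependent stabilization (which is built from the same quantity $\spe{1}{(1/2-\KK)u_0}{\Gamma}\,P(\v_h)$ in both $\widetilde F$ and $\widetilde F_V$). So I would first reduce the claim to estimating the two genuinely different contributions, namely
\begin{align*}
F(\v_h) - F_V(\v_h)
= \spl{f}{v_h - \IIh^* v_h}{\Omega}
+ \spe{t_0}{v_h - \IIh^* v_h}{\Gamma},
\end{align*}
and then bound each summand separately, noting that everything is driven by the interpolation defect $v_h - \IIh^* v_h$ supplied by Lemma~\ref{lem:proplindual}.

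For the volume term I would split over elements and apply Cauchy--Schwarz on each $T$, then invoke~\eqref{eq3:proplindual} to replace $\norm{v_h - \IIh^* v_h}{L^2(T)}$ by $h_T \norm{\nabla v_h}{L^2(T)}$, which directly produces the first sum on the right-hand side of~\eqref{eq:fcompare}. The boundary term is slightly more delicate and is where the orthogonality property~\eqref{eq2:proplindual} must be exploited: since $\int_E (v_h - \IIh^* v_h)\,ds = 0$ on every edge $E \in \EEr$, I can subtract the edgewise integral mean $\overline{t}_0$ for free, writing
\begin{align*}
\spe{t_0}{v_h - \IIh^* v_h}{\Gamma}
= \sum_{E\in\EEr} \int_E (t_0 - \overline{t}_0)(v_h - \IIh^* v_h)\,ds.
\end{align*}
Applying Cauchy--Schwarz edgewise and then the trace-type estimate~\eqref{eq4:proplindual} to control $\norm{v_h - \IIh^* v_h}{L^2(E)}$ by $C h_E^{1/2} \norm{\nabla v_h}{L^2(T_E)}$ gives the second sum. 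The $H^{-1/2}$ component $\psi_h$ of $\v_h$ plays no role here, which matches the statement.

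The main obstacle, and the only genuinely nontrivial point, is the boundary term: without first subtracting $\overline{t}_0$ one cannot obtain a bound in terms of $\nabla v_h$ alone, because $\IIh^*$ reproduces constants and the naive estimate would leave an uncontrolled $L^2$ factor of $t_0$ against $v_h$. The zero-mean property~\eqref{eq2:proplindual} is precisely what licenses inserting $\overline t_0$ and thereby converting $t_0$ into its oscillation $t_0 - \overline t_0$; this is the step that makes the estimate superconvergent-looking and yields the $h_E^{1/2}$ factor. I expect the remaining work to be routine bookkeeping with Cauchy--Schwarz and the constants from Lemma~\ref{lem:proplindual}, all of which depend only on the shape-regularity constant. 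Throughout I assume $t_0 \in L^2(\Gamma)$, as stipulated at the start of Section~3.
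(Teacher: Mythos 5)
Your proposal is correct and follows essentially the same route as the paper: the paper's (very terse) proof likewise reduces the difference to $\spl{f}{v_h-\IIh^* v_h}{\Omega}+\spe{t_0}{v_h-\IIh^* v_h}{\Gamma}$ and then invokes Cauchy--Schwarz together with~\eqref{eq2:proplindual}--\eqref{eq4:proplindual}. Your write-up merely makes explicit the two points the paper leaves implicit — the cancellation of the BEM and stabilization terms, and the insertion of $\overline{t}_0$ licensed by the zero-mean property~\eqref{eq2:proplindual} — both of which are exactly the intended argument.
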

\begin{proof}
	It is easy to see that from~\eqref{eq:Ffem} and~\eqref{eq:Ffvm} we get
\begin{align*}
		|F(\v_h)-F_V(\v_h)|=\spl{f}{v_h-\IIh^* v_h}{\Omega}
		+\spe{t_0}{v_h-\IIh^* v_h}{\Gamma}.
\end{align*}
The Cauchy-Schwarz inequality and~\eqref{eq2:proplindual}--\eqref{eq4:proplindual}
lead to the assertion.
\end{proof}
The next lemma gives us an estimate between the weak and
the finite volume bilinear
form for a function $v_h\in\SS^1(\TT)$.
\begin{lemma}
 \label{lem:bilinearcompare}
 Let us assume that
 $\b\cdot\normal$ is piecewise constant on $\Gamma^{in}$, 
 i.e. $\b\cdot\normal|_{\Gamma^{in}}\in\PP^0(\EEr^{in})$.
 For all $v_h,w_h\in\SS^1(\TT)$ there hold 
 \begin{align}
  \label{eq:bilinearcompare}
  |\AA(w_h,v_h)&-\AA_V(w_h,v_h)|
    \leq C_1\sum_{T\in\TT}\Big(h_T\norm{w_h}{H^1(T)}
	\norm{v_h}{H^1(T)}\Big),\\
  \label{eq:bilinearcompareup}
  |\AA(w_h,v_h)&-\AA_V^{up}(w_h,v_h)|
    \leq C_2\sum_{T\in\TT}\Big(h_T\norm{w_h}{H^1(T)}
	\norm{v_h}{H^1(T)}\Big),
 \end{align} 
  with constants $C_1,C_2>0$, depending only on the model data $\A$, $\b$, $\r$, and on the shape regularity constant.
\end{lemma}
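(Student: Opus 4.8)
The plan is to split both $\AA$ and $\AA_V$ into their diffusion, reaction and convection contributions and to estimate each difference separately. The guiding principle is that the nodal values $v_h(a_i)$ entering $\AA_V$ are exactly the values of the dual interpolant, so that every volume contribution can be rewritten with $\IIh^* v_h$ in place of $v_h$, and the discrepancies are then governed by $v_h-\IIh^* v_h$ and the interpolation estimates~\eqref{eq2:proplindual}--\eqref{eq4:proplindual}. The reaction term is the warm-up: since $\sum_{a_i\in\NN}v_h(a_i)\int_{V_i}\r w_h\,dx=\spl{\r w_h}{\IIh^* v_h}{\Omega}$, its difference with $\spl{\r w_h}{v_h}{\Omega}$ equals $\spl{\r w_h}{v_h-\IIh^* v_h}{\Omega}$, and Cauchy--Schwarz together with $\r\in L^\infty$ and~\eqref{eq3:proplindual} gives the desired $\sum_T h_T\norm{w_h}{H^1(T)}\norm{v_h}{H^1(T)}$ bound.

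For the convection term I would apply the divergence theorem on each dual cell to the field $\b w_h$, which turns the interior fluxes $\int_{\partial V_i\setminus\Gamma}\b w_h\cdot\normal\,ds$ into $\int_{V_i}\div(\b w_h)\,dx$ minus the contribution on $\partial V_i\cap\Gamma$. Combining with the $\Gamma^{out}$ term recasts the FVM convection part as $\spl{\div(\b w_h)}{\IIh^* v_h}{\Omega}-\spe{\b\cdot\normal\,w_h}{\IIh^* v_h}{\Gamma^{in}}$, while integrating the FEM convection part by parts yields the same expression with $v_h$ in place of $\IIh^* v_h$. Hence the difference is $\spl{\div(\b w_h)}{v_h-\IIh^* v_h}{\Omega}-\spe{\b\cdot\normal\,w_h}{v_h-\IIh^* v_h}{\Gamma^{in}}$. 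The volume part is controlled by $\|\div(\b w_h)\|_{L^2(T)}\le C\norm{w_h}{H^1(T)}$ (using $\b\in W^{1,\infty}$) and~\eqref{eq3:proplindual}. The boundary part is exactly where the hypothesis $\b\cdot\normal|_{\Gamma^{in}}\in\PP^0(\EEr^{in})$ enters: on each edge $E\subset\Gamma^{in}$ the factor $\b\cdot\normal$ is constant and pulls out, leaving $\int_E w_h(v_h-\IIh^* v_h)\,ds$; splitting $w_h$ into its edge-mean plus a zero-mean affine remainder, the mean part is annihilated by~\eqref{eq2:proplindual}, and the remainder is $O(h_E^{1/2})\norm{\nabla w_h}{L^2(T_E)}$ in $L^2(E)$, so that~\eqref{eq4:proplindual} yields $C h_E\norm{\nabla w_h}{L^2(T_E)}\norm{\nabla v_h}{L^2(T_E)}$.

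The diffusion term is the main obstacle, and I would treat it by local averaging of $\A$. Let $\bar\A_T$ denote the mean of $\A$ over $T$, so that $\|\A-\bar\A_T\|_{L^\infty(T)}\le C h_T\norm{\A}{W^{1,\infty}(T)}$. The crux is the classical element-wise identity for piecewise constant coefficients: because $\bar\A_T\nabla w_h$ is constant on $T$, the divergence theorem on each $V_i\cap T$ forces the flux through the interior dual segments (those joining edge midpoints to the barycenter, which by construction never lie on $\Gamma$) to reproduce $\int_T\bar\A_T\nabla w_h\cdot\nabla v_h\,dx$ exactly; summing over $T$ shows that the FEM and FVM diffusion forms coincide for the piecewise-constant coefficient $(\bar\A_T)_T$. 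Writing $\AA^{\mathrm{diff}}_\A-(\AA_V)^{\mathrm{diff}}_\A$ as the telescoping sum $[\AA^{\mathrm{diff}}_\A-\AA^{\mathrm{diff}}_{\bar\A}]+[\AA^{\mathrm{diff}}_{\bar\A}-(\AA_V)^{\mathrm{diff}}_{\bar\A}]+[(\AA_V)^{\mathrm{diff}}_{\bar\A}-(\AA_V)^{\mathrm{diff}}_\A]$, the middle bracket vanishes by this identity, while the two outer brackets carry the factor $\A-\bar\A_T$; with the geometric fact that the dual segments in $T$ have length $O(h_T)$ and standard inverse/scaling estimates relating nodal values and $\norm{\nabla\cdot}{L^2(T)}$, each is bounded by $C h_T\norm{\nabla w_h}{L^2(T)}\norm{\nabla v_h}{L^2(T)}$. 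Verifying the constant-coefficient identity on the barycentric (or circumcentric) dual mesh and then tracking the averaging error cleanly through the restricted, $\Gamma$-excluding fluxes is the delicate point.

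Finally,~\eqref{eq:bilinearcompareup} follows by adding the discrepancy between the two finite volume forms. The difference $\AA_V-\AA_V^{up}$ affects only the interior convective fluxes and equals $\sum_{a_i}v_h(a_i)\sum_{j\in\NN_i}\int_{\tau_{ij}}\b\cdot\normal\,(w_h-w_{h,ij})\,ds$; since $w_{h,ij}$ is a convex combination of the nodal values $w_h(a_i),w_h(a_j)$ and $w_h$ is affine, $|w_h-w_{h,ij}|\le C h_T\,|\nabla w_h|$ on $\tau_{ij}$, so the same inverse/scaling estimates give an $O(h_T)$ bound of the required form. A triangle inequality with~\eqref{eq:bilinearcompare} then yields~\eqref{eq:bilinearcompareup} with $C_2$ depending only on the data and the shape-regularity constant. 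I expect the constant-coefficient diffusion identity to be the step requiring the most care, the convective boundary estimate being the second subtlety that motivates the piecewise-constant assumption on $\b\cdot\normal$.
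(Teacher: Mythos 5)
Your splitting into reaction, convection, and diffusion parts is sound, and two of the three pieces essentially reproduce the paper's own mechanism: the paper derives, by elementwise integration by parts of both forms, an identity (its \eqref{eq:bilinearcomparehelp}) in which every term is paired with $v_h-\IIh^*v_h$, and your reaction estimate and your convection treatment (divergence theorem on the boxes, integration by parts of the weak form, and on $\Gamma^{in}$ pulling out the constant $\b\cdot\normal$ and subtracting the edge mean of $w_h$ so that \eqref{eq2:proplindual} annihilates the mean part) are exactly that argument. Your diffusion route is genuinely different and in principle viable: telescoping through element averages $\overline{\A}_T$ plus the exact coincidence of the weak and finite volume diffusion forms for piecewise constant coefficients, whereas the paper keeps the variable $\A$ and ends up with edge terms $(\A-\overline{\A}|_E)\nabla w_h\cdot\normal$ paired with $v_h-\IIh^*v_h$; the constant-coefficient identity you invoke is indeed true here, in both dimensions, precisely because of \eqref{eq2:proplindual}.

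There is, however, a genuine gap in the two places where you appeal to ``standard inverse/scaling estimates relating nodal values and $\norm{\nabla\cdot}{L^2(T)}$'': the bracket comparing the finite volume form with coefficients $\A$ and $\overline{\A}_T$, and the upwind discrepancy $\AA_V-\AA_V^{up}$. Both are sums of the form $\sum_{a_i\in\NN} v_h(a_i)\,I_i$, where each $I_i$ is a dual-face integral of size $O(h_T^{d})|\nabla w_h|_T$ (one factor $h_T$ from $\A-\overline{\A}_T$, respectively from $w_h-w_{h,ij}$, and $h_T^{d-1}$ from the measure of the dual segments). Nodal values are not controlled by gradient seminorms at all (take $v_h$ constant), and the available bound $|v_h(a_i)|\le C h_T^{-d/2}\norm{v_h}{L^2(T)}$ yields only $C\norm{v_h}{L^2(T)}\norm{\nabla w_h}{L^2(T)}$ per element: the factor $h_T$ is lost, so the estimates as stated fail. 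The missing idea is the conservation/antisymmetry structure of the finite volume form: every interior dual segment is shared by exactly two boxes with opposite normals, and for the upwind term $w_{h,ij}=w_{h,ji}$ because $\lambda_{ji}=1-\lambda_{ij}$; hence the sums can be reorganized over dual faces with weights $v_h(a_i)-v_h(a_j)$, and these differences are $O(h_T|\nabla v_h|_T)$ since $v_h$ is affine on each element containing both nodes. Equivalently, constant-weighted flux sums vanish, so you may replace $v_h(a_i)$ by $v_h(a_i)-c_T$ with an elementwise constant $c_T$. With this ingredient both estimates close with the required factor $h_T$. Note that the paper never meets this difficulty: in its identity the test function enters only through $v_h-\IIh^*v_h$, never through naked nodal values, and it disposes of $|\AA_V-\AA_V^{up}|$ by citing \cite[Lemma~6.1]{Erath:2012-1}, whose proof uses precisely the dual-face pairing just described.
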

\begin{proof}
  Let us define $v_h^*:=\II_h^*v_h\in\PP^0(\TT^*)$.
  Using integration by parts for
  $\AA(w_h,v_h)$ and $\AA_V(w_h,v_h)$ the lines in the proof 
  of~\cite[Lemma 5.2]{Erath:2012-1} show 
  with~\eqref{eq2:proplindual}
  \begin{align}
	\label{eq:bilinearcomparehelp}
   \begin{split}
   \AA(w_h,v_h)&-\AA_V(w_h,v_h)\\
   &=\sum_{T\in\TT}\Big(\spl{-(\div\A)\nabla w_h+(\div\b)w_h+
   \b\cdot \nabla w_h+\r w_h}{v_h-v_h^*}{T}\\
   &\qquad\qquad+\sum_{E\in\EEt}\spl{(\A-\overline{\A})\nabla w_h\cdot\normal}
   {v_h-v_h^*}{E}\\
   &\qquad\qquad-\sum_{E\in\EEt\cap\Gamma^{in}}
   \spl{\b\cdot\normal(w_h-\overline{w}_h)}{v_h-v_h^*}{E}\Big).
   \end{split}
  \end{align}
  Here, $\div\A$ is the divergence operator  
  applied to the columns of $\A$, 
  $\overline{\A}|_E\in \R^{d\times d}$ 
  is the  average of $\A$ over $E$, 
  and $\overline{w}_{h}\in\PP^0(\EEr)$ is the best $L^2(\Gamma)$ approximation of $w_h$.
With a standard approximation argument we prove
 \begin{align*}
    |\AA(w_h,v_h)&-\AA_V(w_h,v_h)|\\
     &\leq\sum_{T\in\TT}C_T\Big(\norm{w_h}{H^1(T)}\norm{v_h-v_h^*}{L^2(T)}
     +\sum_{E\in\EEt}h_E\norm{\nabla w_h}{L^2(E)}\norm{v_h-v_h^*}{L^2(E)}\\
     &\qquad\qquad+\sum_{E\in\EEt\cap\EEr^{in}}\norm{w_h-\overline{w}_h}{L^2(E)}
     \norm{v_h-v_h^*}{L^2(E)}\Big)
  \end{align*}
  with a constant $C_T>0$, which depends only on the shape regularity constant,
  and the model data $\A$, $\b$, and $\r$.
  The standard scaling inequalities
  $\norm{\nabla w_h}{L^2(E)}\leq C h_E^{-1/2}\norm{\nabla w_h}{L^2(T)}$ and
  $\norm{w_h-\overline{w}_h}{L^2(E)}\leq C h_T^{1/2}\norm{\nabla w_h}{L^2(T)}$, together with 
  and~\eqref{eq3:proplindual}-\eqref{eq4:proplindual},
prove~\eqref{eq:bilinearcompare}.
  To prove~\eqref{eq:bilinearcompareup} we write
  \begin{align*}
    |\AA(w_h,v_h)-\AA_V^{up}(w_h,v_h)|\leq
    |\AA(w_h,v_h)-\AA_V(w_h,v_h)|+|\AA_V(w_h,v_h)-\AA_V^{up}(w_h,v_h)|.
  \end{align*}
  Note that we can directly apply~\eqref{eq:bilinearcompare} for the first
  and~\cite[Lemma 6.1]{Erath:2012-1} for the second difference to
  show~\eqref{eq:bilinearcompareup}.
\end{proof}
\begin{remark}
  If $\A$ is $\TT$-piecewise constant, all parts with $\A$
  vanish in~\eqref{eq:bilinearcomparehelp} because of $\div\A=0$
  and~\eqref{eq2:proplindual} since $\nabla w_h$ is constant.
  This is well-known and if $\b=(0,0)^T$ and $\r=0$
  there even holds $\AA(w_h,v_h)=\AA_V(w_h,v_h^*)$, 
  see e.g.~\cite{Bank:1987-1,Hackbusch:1989-1}.
  Thus the following analysis also holds if $\A$ is $\TT$-piecewise constant.	
\end{remark}
Collecting all the results together we prove:
\begin{lemma}
 \label{lem:stabcomp}
 Let us assume that
 $\b\cdot\normal$ is piecewise constant on $\Gamma^{in}$, 
 i.e. $\b\cdot\normal|_{\Gamma^{in}}\in\PP^0(\EEr^{in})$.
 For all $\w_h=(w_h,\xi_h)\in\HH_h$ and $\v_h=(v_h,\psi_h)\in\HH_h$ there holds 
 \begin{equation}
  \label{eq:stabcomp}
  |\BBstab{\w_h}{\v_h}-
  \BBVstab{\w_h}{\v_h}|
    \leq C\sum_{T\in\TT}\Big(h_T\norm{w_h}{H^1(T)}
	\norm{v_h}{H^1(T)}\Big)
 \end{equation} 
  with a constant $C>0$, which depends only
 on the model data $\A$, $\b$, $\r$, and the shape regularity constant.
 The statement is also true if we replace 
 $\AA_V$ by $\AA_V^{up}$ in the corresponding
 bilinear forms.
\end{lemma}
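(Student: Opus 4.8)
The plan is to reduce the claimed estimate on the stabilized bilinear forms to the already-established estimate on the unstabilized finite volume bilinear form. First I would compute the difference $\BBstab{\w_h}{\v_h}-\BBVstab{\w_h}{\v_h}$ directly from the definitions~\eqref{bbtilde} of $\widetilde{\BB}$ and~\eqref{eq:Bstabfvm} of $\widetilde{\BB}_V$. The crucial observation is that the stabilization term is identical in both formulations: $\BBstab{\w_h}{\v_h}=\BB(\w_h;\v_h)+\beta P(\w_h)P(\v_h)$ and $\BBVstab{\w_h}{\v_h}=\BB_V(\w_h;\v_h)+\beta P(\w_h)P(\v_h)$, so the $\beta$-dependent perturbation cancels exactly in the difference. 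Hence
\begin{align*}
\BBstab{\w_h}{\v_h}-\BBVstab{\w_h}{\v_h}=\BB(\w_h;\v_h)-\BB_V(\w_h;\v_h).
\end{align*}

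Next I would examine the difference $\BB(\w_h;\v_h)-\BB_V(\w_h;\v_h)$ term by term using the definitions~\eqref{eq:Bfem} of $\BB$ and the $\BB_V$ defined in the FVM section. The two bilinear forms share the boundary coupling contributions $-\spe{\xi_h}{v_h}{\Gamma}$ and the BEM terms $\spe{\psi_h}{(1/2-\KK)w_h}{\Gamma}+\spe{\psi_h}{\VV\xi_h}{\Gamma}$; these are identical in both forms since they do not involve the interpolation operator $\IIh^*$ applied to test functions in the coupling of $\psi_h$. The only genuinely differing contribution is the volume/interior part, namely $\AA(w_h,v_h)$ in $\BB$ versus $\AA_V(w_h,v_h)$ in $\BB_V$. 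Therefore the difference collapses to $\AA(w_h,v_h)-\AA_V(w_h,v_h)$, to which Lemma~\ref{lem:bilinearcompare}, specifically the estimate~\eqref{eq:bilinearcompare}, applies directly under the standing hypothesis that $\b\cdot\normal|_{\Gamma^{in}}\in\PP^0(\EEr^{in})$. This immediately yields~\eqref{eq:stabcomp} with the same structural bound $C\sum_{T\in\TT}h_T\norm{w_h}{H^1(T)}\norm{v_h}{H^1(T)}$.

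The one point requiring care, and the step I expect to be the main (though mild) obstacle, is verifying that the coupling term $-\spe{\xi_h}{v_h}{\Gamma}$ in $\BB$ and $-\spe{\xi_h}{\IIh^*v_h}{\Gamma}$ in $\BB_V$ do not contribute to the difference. A priori these differ by $-\spe{\xi_h}{v_h-\IIh^* v_h}{\Gamma}$, which is not obviously zero. I would resolve this by invoking the orthogonality property~\eqref{eq2:proplindual}: since $\xi_h\in\PP^0(\EEr)$ is $\EEr$-piecewise constant and $\int_E(v_h-\IIh^* v_h)\,ds=0$ for every boundary edge $E\in\EEr$, the pairing $\spe{\xi_h}{v_h-\IIh^* v_h}{\Gamma}$ vanishes edge by edge. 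Thus this boundary term cancels exactly and contributes nothing to the bound. For the final assertion about the upwind scheme, I would simply note that replacing $\AA_V$ by $\AA_V^{up}$ changes the argument only in the interior term, and there we invoke~\eqref{eq:bilinearcompareup} in place of~\eqref{eq:bilinearcompare}; since~\eqref{eq:bilinearcompareup} carries the same right-hand side structure, the identical bound~\eqref{eq:stabcomp} follows with a possibly different constant $C$.
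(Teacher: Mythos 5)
Your proposal is correct and follows essentially the same route as the paper's own proof: the stabilization terms $\beta P(\w_h)P(\v_h)$ and the BEM terms cancel in the difference, the boundary coupling mismatch $\spe{\xi_h}{v_h-\IIh^*v_h}{\Gamma}$ vanishes edge by edge via~\eqref{eq2:proplindual} because $\xi_h\in\PP^0(\EEr)$, and the remaining interior difference $\AA(w_h,v_h)-\AA_V(w_h,v_h)$ is bounded by~\eqref{eq:bilinearcompare}, with~\eqref{eq:bilinearcompareup} handling the upwind case. The point you flagged as the main obstacle is exactly the step the paper resolves the same way, so nothing further is needed.
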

\begin{proof}
We estimate
\begin{align*}
	|\BBstab{\w_h}{\v_h}-
	\BBVstab{\w_h}{\v_h}|
	&= |\AA(w_h,v_h)-\AA_V(w_h,v_h)
	-\spe{\xi_h}{v_h-\IIh^*v_h}{\Gamma}|\\
	&\leq C\sum_{T\in\TT}\Big(h_T\norm{w_h}{H^1(T)}
	\norm{v_h}{H^1(T)}\Big),
\end{align*}
where we used~\eqref{eq:bilinearcompare} and~\eqref{eq2:proplindual} 
since $\xi_h\in\PP^0(\EEr)$. Using ~\eqref{eq:bilinearcompareup}, 
the proof with $\AA_V^{up}$ follows from this bound.
\end{proof}
\begin{theorem}[Stability]
\label{th:stability}
There exists $H>0$ such that the following statement is valid provided
that $\TT$ is sufficiently fine, i.e., $h:=\max_{T\in\TT}h_T<H$:
Let $\lambda_{\min}(\A)>C_{\KK}/4$ with the contraction constant
$C_{\KK}\in[1/2,1)$  of the double layer potential.
Furthermore, let
 $\b\cdot\normal$ be piecewise constant on $\Gamma^{in}$, 
 i.e. $\b\cdot\normal|_{\Gamma^{in}}\in\PP^0(\EEr^{in})$.
Then, there exists a constant $C_{\rm Vstab}>0$ such that
\begin{align}
\label{eq:stabV}
\BBVstab{\v_h}{\v_h}\geq C_{\rm Vstab} \norm{\v_h}{\HH}^2
\qquad\text{for all } \v_h\in \HH_h.	
\end{align}
The constant $C_{\rm Vstab}>0$ depends only
 on the model data $\A$, $\b$, $\r$, the contraction constant $C_{\KK}$, 
 and the shape regularity constant.
 The statement also holds if we replace 
 $\AA_V$ by $\AA_V^{up}$ in the corresponding
 bilinear forms.
\end{theorem}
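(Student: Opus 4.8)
The plan is to transfer the continuous ellipticity estimate~\eqref{eq:ellipticity:konv} of $\widetilde{\BB}$ to the discrete FVM-BEM bilinear form $\widetilde{\BB}_V$ by treating the latter as a perturbation of the former. By Remark~\ref{rem:discellipticity}, the ellipticity estimate for $\widetilde{\BB}$ holds verbatim on the discrete space $\HH_h$, since the constant functions lie in $\PP^0(\EEr)$; that is, for all $\v_h=(v_h,\psi_h)\in\HH_h$ we have
\begin{align*}
\BBstab{\v_h}{\v_h}\geq C_{\rm stab}\left[\norm{\nabla v_h}{L^2(\Omega)}^2+(1-\beta)\norm{v_h}{L^2(\Omega)}^2+\beta P(\v_h)^2+\norm{\psi_h}{\VV}^2\right].
\end{align*}
By the Remark following the Theorem, the bracketed quantity is an equivalent norm on $\HH$, so there is a constant $\widehat C>0$ with $\BBstab{\v_h}{\v_h}\geq \widehat C\,\norm{\v_h}{\HH}^2$ for all $\v_h\in\HH_h$.

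First I would write $\BBVstab{\v_h}{\v_h}=\BBstab{\v_h}{\v_h}-\big(\BBstab{\v_h}{\v_h}-\BBVstab{\v_h}{\v_h}\big)$ and bound the perturbation term using Lemma~\ref{lem:stabcomp} with $\w_h=\v_h$, giving
\begin{align*}
\big|\BBstab{\v_h}{\v_h}-\BBVstab{\v_h}{\v_h}\big|\leq C\sum_{T\in\TT}h_T\norm{v_h}{H^1(T)}^2\leq C\,h\,\norm{v_h}{H^1(\Omega)}^2\leq C\,h\,\norm{\v_h}{\HH}^2,
\end{align*}
where $h=\max_{T\in\TT}h_T$ and I have used $h_T\le h$ together with $\sum_T\norm{v_h}{H^1(T)}^2=\norm{v_h}{H^1(\Omega)}^2\le\norm{\v_h}{\HH}^2$. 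Combining the two displays yields
\begin{align*}
\BBVstab{\v_h}{\v_h}\geq (\widehat C - C\,h)\,\norm{\v_h}{\HH}^2.
\end{align*}
Choosing $H:=\widehat C/(2C)$, then for any mesh with $h<H$ the prefactor satisfies $\widehat C-Ch>\widehat C/2=:C_{\rm Vstab}>0$, which is the claimed estimate~\eqref{eq:stabV}. Since Lemma~\ref{lem:stabcomp} holds with $\AA_V^{up}$ in place of $\AA_V$ (with a constant $C$ depending on the same data), the identical argument gives the upwind version, only changing the value of $H$ and $C_{\rm Vstab}$.

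The main obstacle, and the point requiring care rather than routine estimation, is the first step: justifying that the continuous ellipticity estimate carries over to $\HH_h$ and that the bracketed energy genuinely dominates $\norm{\v_h}{\HH}^2$ uniformly in $h$. This rests on the hypothesis $\lambda_{\min}(\A)>C_{\KK}/4$ (so that $C_{\rm stab}>0$), on the inclusion of constants in $\PP^0(\EEr)$ that validates Remark~\ref{rem:discellipticity}, and on the norm-equivalence asserted in the Remark after the Theorem; the equivalence constant $\widehat C$ is $h$-independent precisely because it comes from the continuous statement. Everything after that is a standard perturbation argument, where the factor $h_T$ appearing in Lemma~\ref{lem:stabcomp} is exactly what allows the FVM consistency error to be absorbed once the mesh is fine enough.
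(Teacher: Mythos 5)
Your proposal is correct and follows essentially the same route as the paper: apply Lemma~\ref{lem:stabcomp} with $\w_h=\v_h$ to bound the FVM--FEM perturbation by $C\,h\,\norm{v_h}{H^1(\Omega)}^2$, invoke the continuous ellipticity estimate~\eqref{eq:ellipticity:konv} (valid on $\HH_h\subset\HH$, with the $h$-independent norm equivalence giving $\BBstab{\v_h}{\v_h}\geq C_{\rm stab}'\norm{\v_h}{\HH}^2$), and absorb the perturbation for $h$ small enough; the upwind case follows identically. Your write-up is merely more explicit than the paper's about why the right-hand side of~\eqref{eq:ellipticity:konv} dominates $\norm{\v_h}{\HH}^2$, which is a point the paper compresses into the constant $C_{\rm stab}'$.
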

\begin{proof}
	From~\eqref{eq:stabcomp}  we see with $C'>0$
	\begin{align*}
  \BBVstab{\v_h}{\v_h}
    &\geq \BBstab{\v_h}{\v_h}
	- C'h\| v_h\|_{H^1(\Omega)}^2 .
	\end{align*}
	The stability estimate~\eqref{eq:ellipticity:konv} provides
	$\BBstab{\v_h}{\v_h}\geq C_{\rm stab}' \norm{\v_h}{\HH}^2$ 
	with $C_{\rm stab}'>0$, 
	which proves the coercivity estimate for $h$ small enough.
	The proof with $\AA_V^{up}$
	is the same.
\end{proof}
\begin{theorem}[A~priori convergence estimate]
\label{th:fvmapriori}
There exists $H>0$ such that the following statement is valid provided
that $\TT$ is sufficiently fine, i.e., $h:=\max_{T\in\TT}h_T<H$:
Let $\lambda_{\min}(\A)>C_{\KK}/4$ with the contraction constant
$C_{\KK}\in[1/2,1)$ of the double layer potential $\KK$.
Furthermore, let
$\b\cdot\normal$ be piecewise constant on $\Gamma^{in}$, 
i.e. $\b\cdot\normal|_{\Gamma^{in}}\in\PP^0(\EEr^{in})$.
For the solution $\u=(u,\phi)\in\HH= H^1(\Omega)\times H^{-1/2}(\Gamma)$ 
of our model problem~\eqref{eq:FEMBEMstab} and the discrete solution
$\u_h=(u_h,\phi_h)\in\HH_h= \SS^1(\TT)\times \PP^0(\EEr))$ 
of our FVM-BEM coupling~\eqref{eq:FVMBEMstab}
there holds
\begin{align*}
\norm{\u-\u_h}{\HH}\leq C_{\rm est}\Big(
h\norm{f}{L^2(\Omega)}
+h^{1/2}\norm{t_0-\overline{t}_0}{L^2(\Gamma)}
+(1+h)\inf_{\v_h\in\HH_h}\norm{\u-\v_h}{\HH}
+h\norm{\u}{\HH}\Big),
\end{align*}
where $\overline{t}_0$ is the $\EEr$-piecewise integral mean of $t_0$.
The constant $C_{\rm est}>0$ depends only
 on the model data $\A$, $\b$, $\r$, the contraction constant $C_{\KK}$, 
 and the shape regularity constant.
In particular, if $u\in H^2(\Omega)$, $\phi\in H^{1/2}(\EEr)$, and
$t_0\in H^{1/2}(\EEr)$, where
\begin{align*}
H^{1/2}(\EEr):=\set{v\in L^2(\Gamma)}{v|_E\in H^{1/2}(E)\text{ for all }E\in\EEr},
\end{align*}
we have first order convergence
\begin{align*}
\norm{\u-\u_h}{\HH}=\mathcal{O}(h).
\end{align*}
The statement is also true if we replace 
$\AA_V$ by $\AA_V^{up}$ in the corresponding
bilinear forms.
\end{theorem}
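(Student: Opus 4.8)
The plan is to prove the a~priori estimate via a standard Strang-type argument, exploiting the discrete ellipticity already established in Theorem~\ref{th:stability} together with the consistency estimates from Lemmas~\ref{lem:fcompare} and~\ref{lem:stabcomp}. First I would fix an arbitrary $\v_h=(v_h,\psi_h)\in\HH_h$ and split the error as $\norm{\u-\u_h}{\HH}\leq\norm{\u-\v_h}{\HH}+\norm{\v_h-\u_h}{\HH}$; the first term will ultimately be absorbed into the infimum on the right-hand side. For the discrete part, I would set $\w_h:=\u_h-\v_h\in\HH_h$ and apply the coercivity bound
\begin{align*}
C_{\rm Vstab}\norm{\w_h}{\HH}^2\leq\BBVstab{\w_h}{\w_h}=\BBVstab{\u_h}{\w_h}-\BBVstab{\v_h}{\w_h}.
\end{align*}
Using the Galerkin equation~\eqref{eq:FVMBEMstab} for $\u_h$, i.e.\ $\BBVstab{\u_h}{\w_h}=\widetilde F_V(\w_h)$, the goal is to convert the right-hand side into computable consistency terms.

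The key step is then to insert the \emph{continuous} stabilized formulation~\eqref{eq:FEMBEMstab}, which the exact solution $\u$ satisfies: $\BBstab{\u}{\w_h}=\widetilde F(\w_h)$. I would write
\begin{align*}
C_{\rm Vstab}\norm{\w_h}{\HH}^2
&\leq\big(\widetilde F_V(\w_h)-\widetilde F(\w_h)\big)
+\big(\BBstab{\u}{\w_h}-\BBVstab{\v_h}{\w_h}\big),
\end{align*}
and further decompose the second bracket as
$\big(\BBstab{\u}{\w_h}-\BBstab{\v_h}{\w_h}\big)+\big(\BBstab{\v_h}{\w_h}-\BBVstab{\v_h}{\w_h}\big)$.
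The three resulting contributions are handled by the machinery already in place: the functional difference $\widetilde F_V-\widetilde F$ is controlled by Lemma~\ref{lem:fcompare}, yielding the $h\norm{f}{L^2(\Omega)}$ and $h^{1/2}\norm{t_0-\overline t_0}{L^2(\Gamma)}$ terms (the $\beta$-dependent part of $\widetilde F$ cancels identically since it uses the same $P(\w_h)$ factor in both functionals); the bilinear-form difference $\BBstab{\v_h}{\w_h}-\BBVstab{\v_h}{\w_h}$ is bounded by Lemma~\ref{lem:stabcomp}, producing a term of the form $Ch\norm{v_h}{H^1(\Omega)}\norm{w_h}{H^1(\Omega)}$; and the Galerkin-type difference $\BBstab{\u-\v_h}{\w_h}$ is estimated by boundedness of $\widetilde\BB$ (noted after the ellipticity theorem) as $C\norm{\u-\v_h}{\HH}\norm{\w_h}{\HH}$. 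After dividing through by $\norm{\w_h}{\HH}$ and bounding $\norm{v_h}{H^1(\Omega)}\leq\norm{\u}{\HH}+\norm{\u-\v_h}{\HH}$, I would collect terms, take the infimum over $\v_h\in\HH_h$, and read off the stated estimate with the factors $h$, $h^{1/2}$, $(1+h)$, and $h\norm{\u}{\HH}$.

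The main obstacle I anticipate is the bookkeeping of the consistency terms rather than any deep new idea: Lemma~\ref{lem:stabcomp} only provides control in $\norm{w_h}{H^1(T)}$ on the interior component, so I must carefully track that the $\HH$-norm of $\w_h$ dominates its $H^1(\Omega)$ part and that summing the local estimates via Cauchy--Schwarz over $T\in\TT$ reproduces global norms without losing the correct powers of $h$. A subtle point is justifying that $\norm{v_h}{H^1(\Omega)}$ can be replaced by data of the exact solution plus the approximation error—this is where the extra $h\norm{\u}{\HH}$ term originates and where the $(1+h)$ prefactor on the infimum arises. The upwind case follows verbatim because Lemmas~\ref{lem:stabcomp} and~\ref{lem:equi:fvm} both carry over to $\AA_V^{up}$, so no separate argument is needed. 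Finally, the convergence rate $\mathcal O(h)$ under the regularity hypotheses $u\in H^2(\Omega)$, $\phi\in H^{1/2}(\EEr)$, $t_0\in H^{1/2}(\EEr)$ is obtained by invoking standard approximation estimates for $\SS^1(\TT)$ and $\PP^0(\EEr)$ to bound $\inf_{\v_h}\norm{\u-\v_h}{\HH}=\mathcal O(h)$ and $\norm{t_0-\overline t_0}{L^2(\Gamma)}=\mathcal O(h^{1/2})\cdot\mathcal O(h^{1/2})=\mathcal O(h)$.
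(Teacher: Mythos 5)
Your proposal is correct and follows essentially the same route as the paper's proof: discrete coercivity of $\widetilde{\BB}_V$ applied to $\w_h=\u_h-\v_h$, insertion of the continuous stabilized formulation, the splitting into the functional consistency term (Lemma~\ref{lem:fcompare}), the bilinear-form consistency term (Lemma~\ref{lem:stabcomp}), and the boundedness of $\widetilde{\BB}$ applied to $\BBstab{\u-\v_h}{\w_h}$, followed by the triangle inequality and the bound $\norm{v_h}{H^1(\Omega)}\leq\norm{\u-\v_h}{\HH}+\norm{\u}{\HH}$. The concluding $\mathcal{O}(h)$ rate via standard approximation estimates for $\SS^1(\TT)$ and $\PP^0(\EEr)$ also matches the paper's (implicit) argument.
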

In the following proof of Theorem~\ref{th:fvmapriori}, 
we write the symbol $\lesssim$, if an
estimate holds up to a multiplicative constant, which depends only
on the model data $\A$, $\b$, $\r$, the contraction constant $C_{\KK}$, 
and the shape regularity constant.
\begin{proof} 
For arbitrary $\v_h=(v_h,\psi_h)\in\HH_h$ we
define $\w_h=(w_h,\varphi_h):=\u_h-\v_h\in\HH_h$. Then we 
get with~\eqref{eq:stabV} 
\begin{align*}
\norm{\u_h-\v_h}{\HH_h}^2
&\lesssim\BBVstab{\u_h}{\w_h}-\BBVstab{\v_h}{\w_h}\\
&=\widetilde F_V(\w_h)-\widetilde F(\w_h)+ \BBstab{\u}{\w_h}-\BBVstab{\v_h}{\w_h},
\end{align*}
where we used the finite volume discrete system~\eqref{eq:FVMBEMstab}
and the FEM-BEM bilinear from~\eqref{eq:FEMBEMstab} with discrete
test functions $\w_h\in\HH_h$.
Since $\widetilde F_V(\w_h)-\widetilde F(\w_h)=F_V(\w_h)-F(\w_h)$ 
we apply~\eqref{eq:fcompare}
and insert $\v_h$ to estimate
\begin{align*}
\norm{\u_h-\v_h}{\HH}^2&\lesssim 
h\norm{f}{L^2(\Omega)}\norm{\nabla w_h}{L^2(\Omega)}+
h_{\EEr}^{1/2}\norm{t_0-\overline{t}_0}{L^2(\Gamma)}\norm{\nabla w_h}{L^2(\Omega)}\\
&\quad+\BBstab{\u-\v_h}{\w_h} +\BBstab{\v_h}{\w_h}-\BBVstab{\v_h}{\w_h},
\end{align*}
where $h_{\EEr}:=\max_{E\in\EEr}h_E$.
For the second term on the right-hand side we apply the boundedness of
$\widetilde{\BB}$
and we estimate the last two terms with~\eqref{eq:stabcomp}.
Thus we obtain
\begin{align*}
\norm{\u_h-\v_h}{\HH}^2& \lesssim 
h\norm{f}{L^2(\Omega)}\norm{\nabla w_h}{L^2(\Omega)}+
h_{\EEr}^{1/2}\norm{t_0-\overline{t}_0}{L^2(\Gamma)}\norm{\nabla w_h}{L^2(\Omega)}\\
&\quad+\norm{\u-\v_h}{\HH}\norm{\w_h}{\HH}+h\norm{v_h}{H^1(\Omega)}
	\norm{w_h}{H^1(\Omega)}.
\end{align*}
Finally with 
$\norm{w_h}{H^1(\Omega)}\leq \norm{\w_h}{\HH}=\norm{{\u}_h-\v_h}{\HH}$
we get
\begin{align*}
\norm{\u_h-\v_h}{\HH}\lesssim h\norm{f}{L^2(\Omega)}
+h_{\EEr}^{1/2}\norm{t_0-\overline{t}_0}{L^2(\Gamma)}
+\norm{\u-\v_h}{\HH}+h\norm{v_h}{H^1(\Omega)}.
\end{align*}
With $\norm{v_h}{H^1(\Omega)}\leq \norm{\u-\v_h}{\HH}+\norm{\u}{\HH}$
and
\begin{align*}
\norm{\u-\u_h}{\HH}\leq\norm{\u-\v_h}{\HH}+\norm{\u_h-\v_h}{\HH}	
\end{align*}
we get the assertion with $h_{\EEr}\geq h$.
The proof with $\AA_V^{up}$
is the same.
\end{proof}

\begin{remark}
In~\cite[see Remark 5.1]{Erath:2012-1}, 
where we consider a FVM-BEM coupling with a three field
coupling approach, we have the constraint $\phi\in L^2(\Gamma)$ 
in the case $\gamma(x)=0$ from assumption~\eqref{eq:bcestimate1} to get convergence
and an error estimate. Note that this regularity is not needed therein to 
prove existence and uniqueness, see~\cite[see Remark 5.2]{Erath:2012-1}.
Furthermore, there is also an additional assumption 
necessary in the case $\gamma(x)=0$, namely $\div\b+\r=0$ in $\Omega$ and 
$\b\cdot\normal=0$ on $\Gamma^{in}$.
Thus Theorem~\ref{th:stability}, which essentially shows existence and
uniqueness of a discrete solution, and Theorem~\ref{th:fvmapriori}
for our non-symmetric FVM-BEM coupling
are much stronger than what is available for the three field
FVM-BEM coupling. However, the constraint $\lambda_{\min}(\A)>C_{\KK}/4$ 
on the eigenvalues of $\A$
is not needed for the three field FVM-BEM coupling.
\end{remark}

\section{Numerical results}
\label{sec:numerics}
In this section we verify our new coupling with three examples.
We stress that in all experiments we consider the discrete
FVM-BEM system~\eqref{eq1:fvm}--\eqref{eq2:fvm} and~\eqref{eq:FVMBEM},
respectively, where we replace $\AA_V$ defined in~\eqref{eq:fvmbilinear}
by the upwind form $\AA_V^{up}$ defined in~\eqref{eq:fvmbilinearup} 
if we use an upwind scheme 
for the convection part.
We mention
once again, that the equivalent 
stabilized FVM-BEM system~\eqref{eq:FVMBEMstab} is only needed for 
theoretical reasons.

All the numerical experiments are done in {\sc Matlab} on a standard
laptop with a dual core $2.8$ GHz processor and $16$ GB memory. 
Only the implementation of the matrices 
resulting from the $\VV$ and $\KK$
expressions is done in \emph{C} using the \emph{mex}-interface of 
{\sc Matlab}~\cite{Erath:2012-1,Erath:2013-2}. 
As introduced earlier, we use the equivalence of norms
$\norm{\phi-\phi_h}{H^{-1/2}(\Gamma)}^2\sim \norm{\phi-\phi_h}{\VV}^2:=
\spe{\VV(\phi-\phi_h)}{\phi-\phi_h}{\Gamma}$,
to calculate the conormal error $\phi-\phi_h$.
Then $\norm{\phi-\phi_h}{\VV}$ leads to an approximation 
of a double integral
by quadrature. The details can be found in~\cite{Erath:2010-phd,Erath:2012-1,Erath:2013-2}.
In all experiments and in each iteration, $\TT$ consists of triangles,
which are up to rotation congruent. In this work we only
consider uniform mesh refinement,
i.e., we divide all triangles 
by four triangles.

\subsection{Mexican hat problem}
\label{ex:bsp1}
\begin{figure}
\begin{center}
\begin{psfrags}%
\psfragscanon%
%
\psfrag{s08}[b][b]{\small error}%
\psfrag{s03}[t][t]{\small number of elements}%
\psfrag{s07}[l][l]{\scriptsize $1$}%
\psfrag{s04}[l][l]{\scriptsize $1/2$}%
\psfrag{s10}[l][l]{\scriptsize $1$}%
\psfrag{s09}[l][l]{\scriptsize $3/4$}%
\psfrag{s12}[l][l]{\scriptsize $1$}%
\psfrag{s11}[l][l]{\scriptsize $1$}%
\psfrag{s01}[l][l]{\small$\norm{\nabla(u-u_h)}{L^2(\Omega)}$}%
\psfrag{s02}[l][l]{\small$\norm{\phi-\phi_h}{\VV}$}%
\psfrag{s13}[l][l]{\small$\norm{u-u_h}{L^2(\Omega)}$}%
%
%
\psfrag{x01}[t][t]{\scriptsize$10^{{1}}$}%
\psfrag{x02}[t][t]{\scriptsize$10^{{2}}$}%
\psfrag{x03}[t][t]{\scriptsize$10^{{3}}$}%
\psfrag{x04}[t][t]{\scriptsize$10^{{4}}$}%
\psfrag{x05}[t][t]{\scriptsize$10^{{5}}$}%
\psfrag{x06}[t][t]{\scriptsize$10^{{6}}$}%
\psfrag{x07}[t][t]{\scriptsize$10^{{7}}$}%
%
\psfrag{v01}[r][r]{\scriptsize$10^{{-7}}$}%
\psfrag{v02}[r][r]{\scriptsize$10^{{-6}}$}%
\psfrag{v03}[r][r]{\scriptsize$10^{{-5}}$}%
\psfrag{v04}[r][r]{\scriptsize$10^{{-4}}$}%
\psfrag{v05}[r][r]{\scriptsize$10^{{-3}}$}%
\psfrag{v06}[r][r]{\scriptsize$10^{{-2}}$}%
\psfrag{v07}[r][r]{\scriptsize$10^{{-1}}$}%
\psfrag{v08}[r][r]{\scriptsize$10^{{0}}$}%
\psfrag{v09}[r][r]{\scriptsize$10^{{1}}$}%
%
\includegraphics[width=0.8\textwidth]{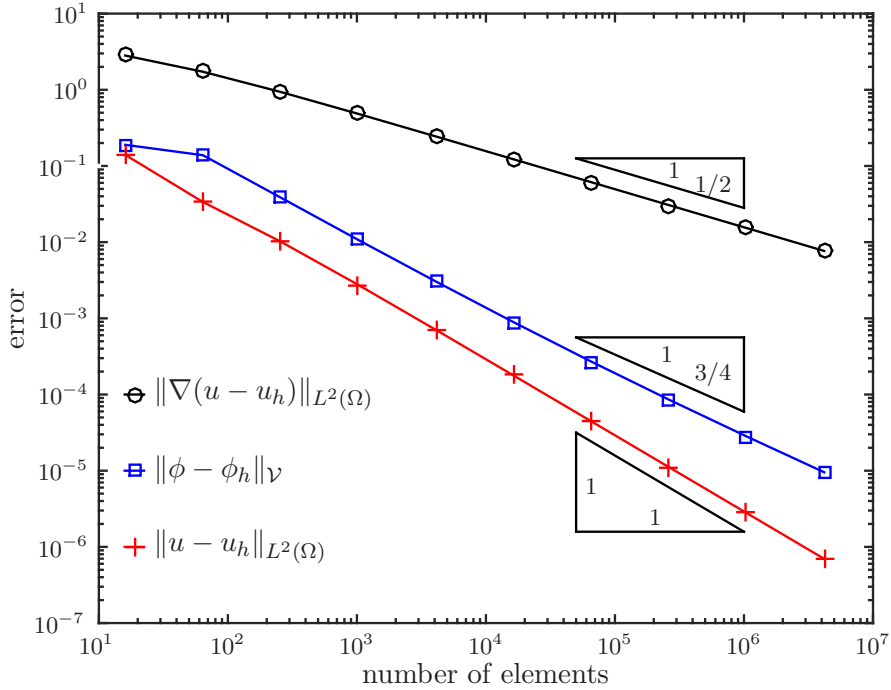}%
\end{psfrags}%
\end{center}
\caption{\label{fig:mexicanhaterror}
The error $\norm{\nabla(u-u_h)}{L^2(\Omega)}$ in the $H^1$-semi-norm,
the error $\norm{u-u_h}{L^2(\Omega)}$ in the $L^2$-norm, and the 
conormal error $\norm{\phi-\phi_h}{\VV}$ in the $\VV$-norm in the 
example in subsection~\ref{ex:bsp1} for uniform mesh-refinement.}
\end{figure}
\begin{figure}
\begin{center}
\includegraphics[width=0.8\textwidth]{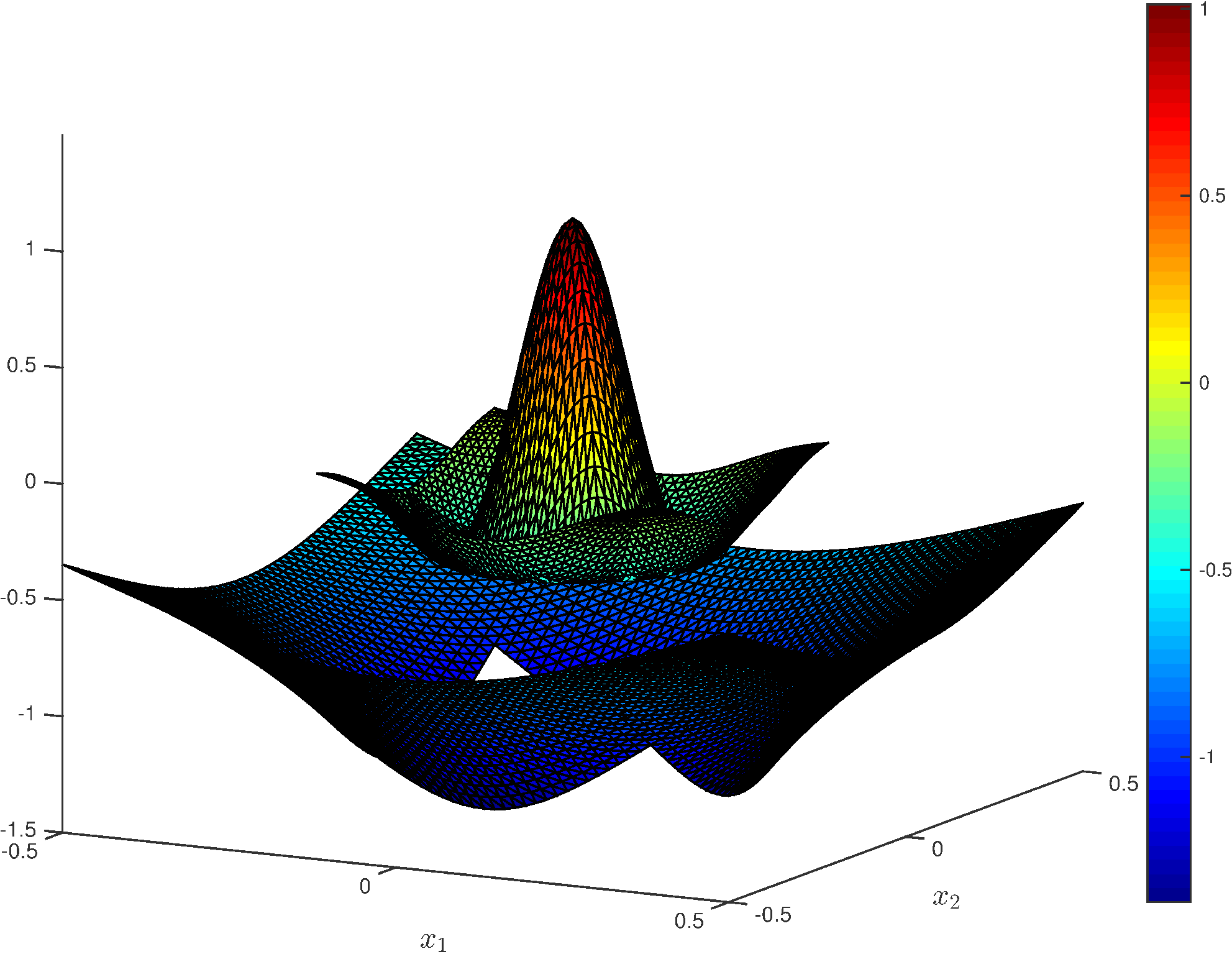}%
\end{center}
\caption{\label{fig:mexicanhat-sol}
Interior and exterior solution
on an uniformly generated mesh with $4096$ elements in the example in  
subsection~\ref{ex:bsp1}.}
\end{figure}
We consider the square $\Omega=(-1/4,1/4)^2$.  
We take the exact solution to be 
 $u(x_1,x_2) = (1-100x_1^2-100x_2^2)e^{-50(x_1^2+x_2^2)}$ in the interior domain and
$u_e(x_1,x_2)=\log\sqrt{x_1^2+x_2^2}$ in the exterior.
The diffusion matrix is
\begin{align}
	\label{eq:bsp1A}
	\A=  \left ( \begin{array}{rr}
  10+\cos x_1 & 160\,x_1 x_2 \\
  160\,x_1 x_2 & 10+\sin x_2
  \end{array}\right),
\end{align}
and we take $\b=(0,0)^T$ and $\r=0$.
Note that in $\Omega$ we have $\lambda_{\min}(\A)=0.342278$ and
$\lambda_{\max}(\A)=10.247271$.
The right-hand side $f$ and the jumps $u_0$ and $t_0$ are calculated appropriately.
We stress that $u$ and $u_e$ are smooth in $\Omega$ and $\Omega_e$, respectively.
Therefore, we expect a convergence order $\OO(h^{1})$ for a first order numerical 
scheme in the $H^1$-norm, where $h:=\max_{T\in\TT}h_T$ 
denotes the uniform mesh-size. This 
corresponds to order $\OO(N^{-1/2})$ with respect to the number of
elements $N$ of $\TT$. The initial mesh $\TT^{(0)}$ consists of $16$ triangles.
Figure~\ref{fig:mexicanhaterror} shows the curves of the interior error $u-u_h$ in 
the $H^1$-semi-norm and $L^2$-norm, respectively, and the 
conormal error of $\phi-\phi_h$ in the $\VV$-norm.
Both axes are scaled logarithmically; i.e.,
a straight line $g$ with slope $-p$ corresponds to a dependence 
$g=\OO(N^{-p})=\OO(h^{2p})$. 
The interior $H^1$-semi-norm error 
leads to a convergence 
order $\OO(N^{-1/2})$, whereas the corresponding $L^2$-norm error decreases
with $\OO(N^{-1})$. Thus, the error in $H^1$-norm behaves like $\OO(N^{-1/2})$.
The convergence of the BEM conormal quantity is optimal in the sense
of $\OO(N^{-3/4})$ due to the smooth solution.
Altogether we see $\norm{\u-\u_h}{\HH}=\OO(N^{-1/2})=\OO(h)$
with $\u=(u,\phi)$ and $\u_h=(u_h,\phi_h)\in\HH_h$,
which was shown in Theorem~\ref{th:fvmapriori} for smooth solutions.

Figure~\ref{fig:mexicanhat-sol} shows
the solution in $\Omega$ and parts of $\Omega_e$.
We observe the jump
on the coupling boundary $\Gamma$ and remark that the BEM solution is generated
pointwise with the aid of the exterior representation formula~\eqref{eq:repformula}
on a uniform grid.
For points on the boundary $\Gamma$ coming
from the exterior domain, we use the exterior trace of~\eqref{eq:repformula}.
Note that instead of~\eqref{eq:cal1} this approximated trace reads
\begin{align}
  \label{eq:repformulaangle}
  u_{e,h}|_\Gamma(x)=-(\VV\phi_h)(x)+
  \Big(\big(\KK+\frac{\varphi}{2\pi}\big)(u_h-u_0)\Big)(x)
\end{align}
for a point evaluation $x\in\Gamma$, where $\varphi$ is the interior angle of the
intersection of the two tangential vectors in $x$.
\begin{remark}
For this example $\gamma(x)=0$ from assumption~\eqref{eq:bcestimate1}.
Thus the analysis needs the stabilized bilinear form~\eqref{eq:Bstabfvm} with
$\beta=1$ from~\eqref{eq:beta}.
In particular, we have the condition
$\lambda_{\min}(\A)>C_{\KK}/4$, where $C_{\KK}\in[1/2,1)$ is the contraction constant 
of the double layer potential $\KK$. Note that our $\A$ with 
$\lambda_{\min}(\A)=0.342278$ fulfills this constraint.
If one replace both values of $160$ by $165$ we get 
$\lambda_{\min}(\A)=0.003033$ which contradicts the bound. 
However, the experiences (not plotted here) show
the right convergence behavior.
This confirms similar observations for FEM-BEM couplings, e.g.~\cite{wien}.
In particular, the bound seems to be a theoretical bound also for our FVM-BEM
coupling approach.
\end{remark}

\subsection{Convection-diffusion problem}
\label{ex:bsp2}
\begin{figure}
\begin{center}
\begin{psfrags}%
\psfragscanon%
%
\psfrag{s03}[l][l]{\scriptsize $1$}%
\psfrag{s04}[l][l]{  \scriptsize    $1/2$    }%
\psfrag{s05}[l][l]{\scriptsize $1$}%
\psfrag{s01}[l][l]{ \scriptsize    $3/4$  }%
\psfrag{s10}[l][l]{\scriptsize $1$}%
\psfrag{s11}[l][l]{ \scriptsize      $1$      }%
\psfrag{s06}[l][l]{\small$\norm{u-u_h}{L^2(\Omega)}$}%
\psfrag{s09}[l][l]{\small$\norm{\nabla(u-u_h)}{L^2(\Omega)}$}%
\psfrag{s08}[l][l]{\small$\norm{\phi-\phi_h}{\VV}$}%
\psfrag{s12}[b][b]{\small error}%
\psfrag{s13}[t][t]{\small number of elements}%
%
%
\psfrag{x01}[t][t]{\scriptsize$10^{{1}}$}%
\psfrag{x02}[t][t]{\scriptsize$10^{{2}}$}%
\psfrag{x03}[t][t]{\scriptsize$10^{{3}}$}%
\psfrag{x04}[t][t]{\scriptsize$10^{{4}}$}%
\psfrag{x05}[t][t]{\scriptsize$10^{{5}}$}%
\psfrag{x06}[t][t]{\scriptsize$10^{{6}}$}%
\psfrag{x07}[t][t]{\scriptsize$10^{{7}}$}%
%
\psfrag{v01}[r][r]{\scriptsize$10^{{-7}}$}%
\psfrag{v02}[r][r]{\scriptsize$10^{{-6}}$}%
\psfrag{v03}[r][r]{\scriptsize$10^{{-5}}$}%
\psfrag{v04}[r][r]{\scriptsize$10^{{-4}}$}%
\psfrag{v05}[r][r]{\scriptsize$10^{{-3}}$}%
\psfrag{v06}[r][r]{\scriptsize$10^{{-2}}$}%
\psfrag{v07}[r][r]{\scriptsize$10^{{-1}}$}%
\psfrag{v08}[r][r]{\scriptsize$10^{{0}}$}%
\psfrag{v09}[r][r]{\scriptsize$10^{{1}}$}%
%
\includegraphics[width=0.8\textwidth]{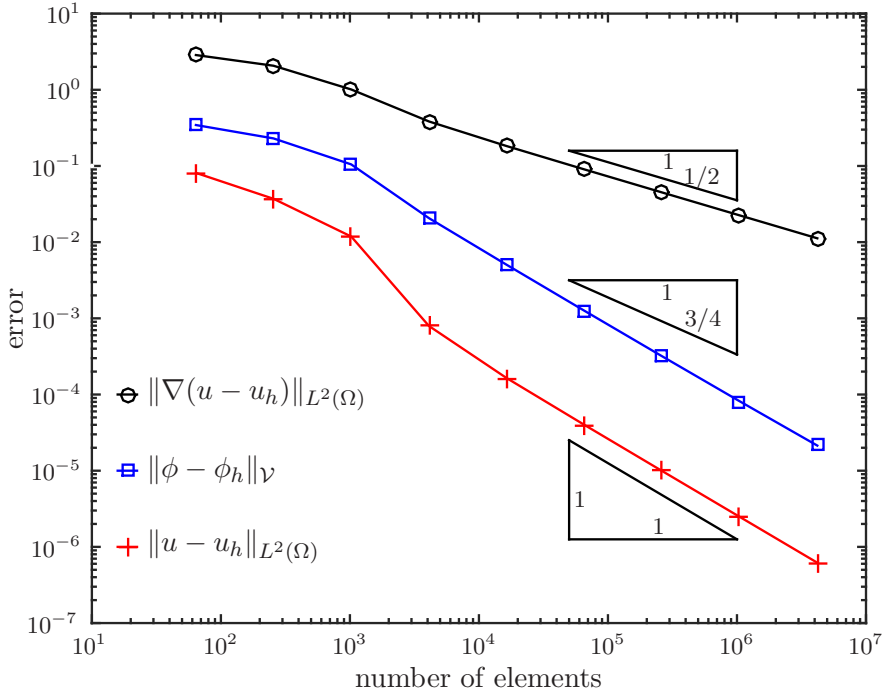}%
\end{psfrags}%
\end{center}
\caption{\label{fig:bsp2-error}
The error $\norm{\nabla(u-u_h)}{L^2(\Omega)}$ in the $H^1$ semi-norm,
the error $\norm{u-u_h}{L^2(\Omega)}$ in the $L^2$ norm, and the 
conormal error $\norm{\phi-\phi_h}{\VV}$ in the $\VV$ norm in the 
example in subsection~\ref{ex:bsp2} for uniform mesh-refinement.}
\end{figure}
\begin{figure}
\begin{center}
\includegraphics[width=0.8\textwidth]{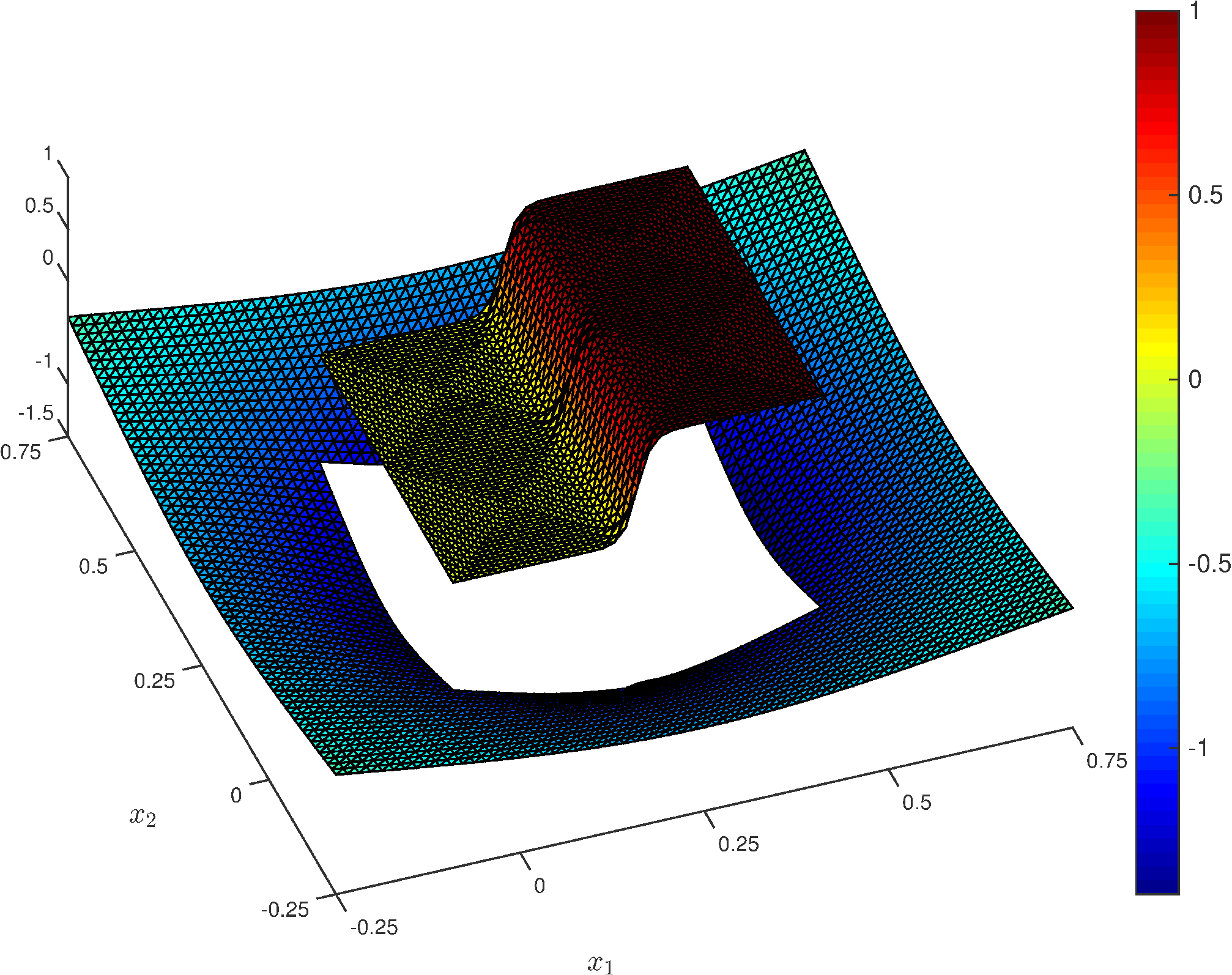}%
\end{center}
\caption{\label{fig:bsp2-sol}
Interior and exterior solution
with a weighted upwinding stabilization on
an uniformly generated mesh with $4096$ elements in the example in  
subsection~\ref{ex:bsp2}.}
\end{figure}
We consider the model problem on the square domain $\Omega=(0,1/2)\times(0,1/2)$.
We choose a fixed
diffusion matrix of $\A= 0.5\,\I$,
a convection field $\b=(1000x_1,0)^T$ and a reaction coefficient $\r=0$.
Note that for this problem we do not have an inflow boundary $\Gamma^{in}$
and thus~\eqref{eq5:model} is not needed.
For all calculations we use the upwind discrete coupling with the weighting
function $\Phi$ defined in~\eqref{eq:weightfunction}.
We prescribe an analytical solution
\begin{align*}
u(x_1,x_2)=0.5\bigg(1-\tanh\Big(\frac{0.25-x_1}{0.02}\Big)\bigg)
\end{align*} 
for the interior domain $\Omega$
and
\begin{align*}
u_e(x_1,x_2)=\log\sqrt{(x_1-0.25)^2+(x_2-0.25)^2}
\end{align*}
for the exterior domain $\Omega_e$.
We calculate the right-hand side $f$ and the jumps $u_0$ and $t_0$ appropriately. 
Note that $\lambda_{\min}(\A)=0.5$ and that the problem is highly convection dominated.

The initial mesh $\TT^{(0)}$ consists of $16$ triangles.
In Figure~\ref{fig:bsp2-error} we plot the convergence rate for uniform
mesh-refinement with respect to the number of elements in $\TT$.
Since the interior and exterior solution are smooth as in the previous example
in subsection~\ref{ex:bsp1}, we observe a similar convergence behavior,
in particular, $\norm{\u-\u_h}{\HH}=\OO(N^{-1/2})=\OO(h)$
with $\u=(u,\phi)$ and $\u_h=(u_h,\phi_h)\in\HH_h$, which
also confirms Theorem~\ref{th:fvmapriori}.
However, due to the strong convection, we have a preasymptotic phase.
We want to mention, that without any upwind stabilization, it is not possible
to get a stable solution even for more than $4$ million elements, 
which is the last mesh in our calculation.
In Figure~\ref{fig:bsp2-sol} we plot the interior and exterior solution.
To resolve the shock at $x_1=0.25$ better and thus to reduce the effects 
to the exterior
domain, one can use adaptive mesh refinement as in~\cite{Erath:2013-1}. 
However, this is beyond this work. 

\subsection{A more practical example}
\label{ex:bsp3}
%
\begin{figure}
\begin{center}
	\subfigure[\label{subfig:bsp3solnoupwind}Interior numerical solution without stabilization.]
	{\includegraphics[width=0.45\textwidth]
	{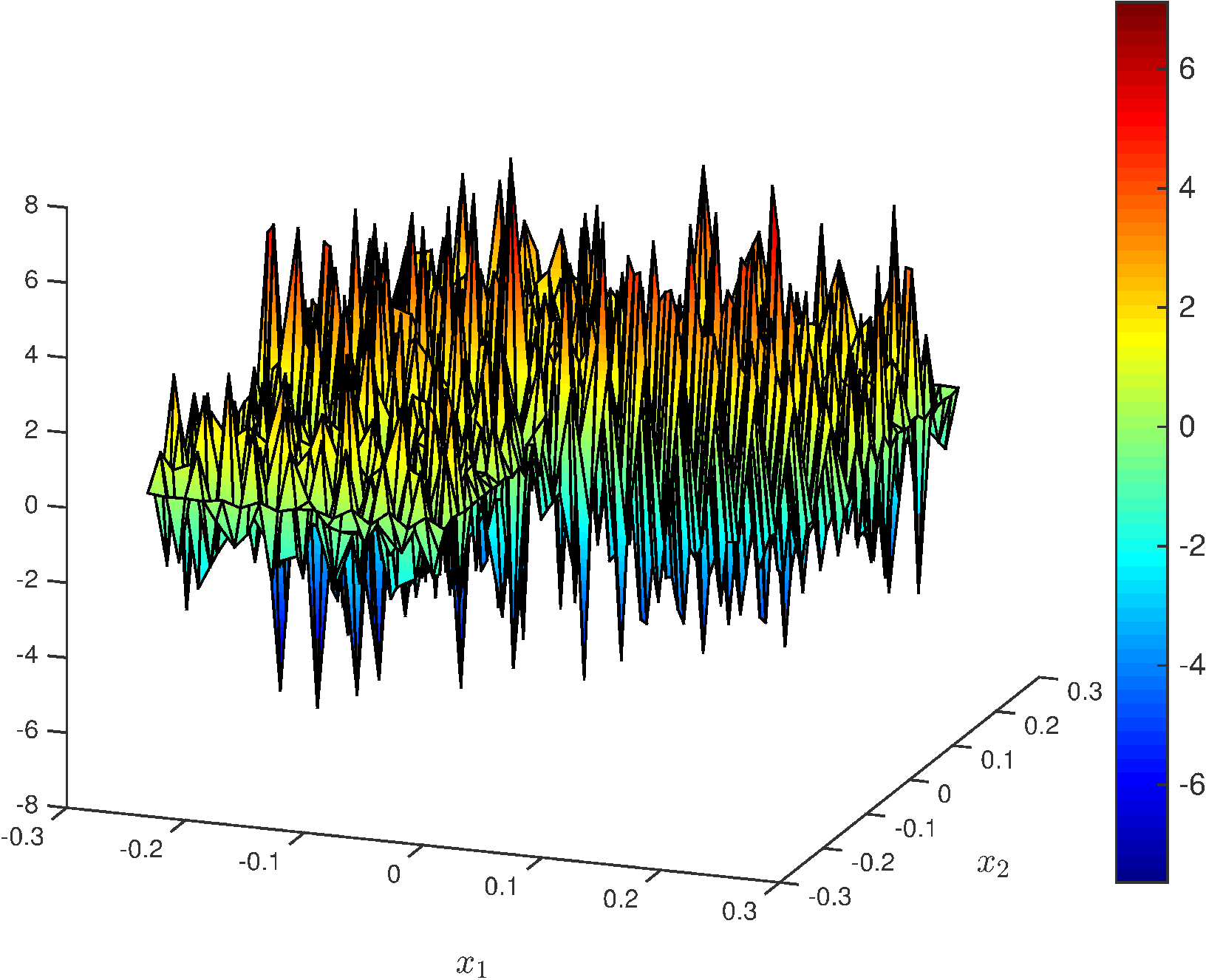}}
	\subfigure[\label{subfig:bsp3contour}Contour lines with full upwind.]
	{\includegraphics[width=0.5\textwidth]
	{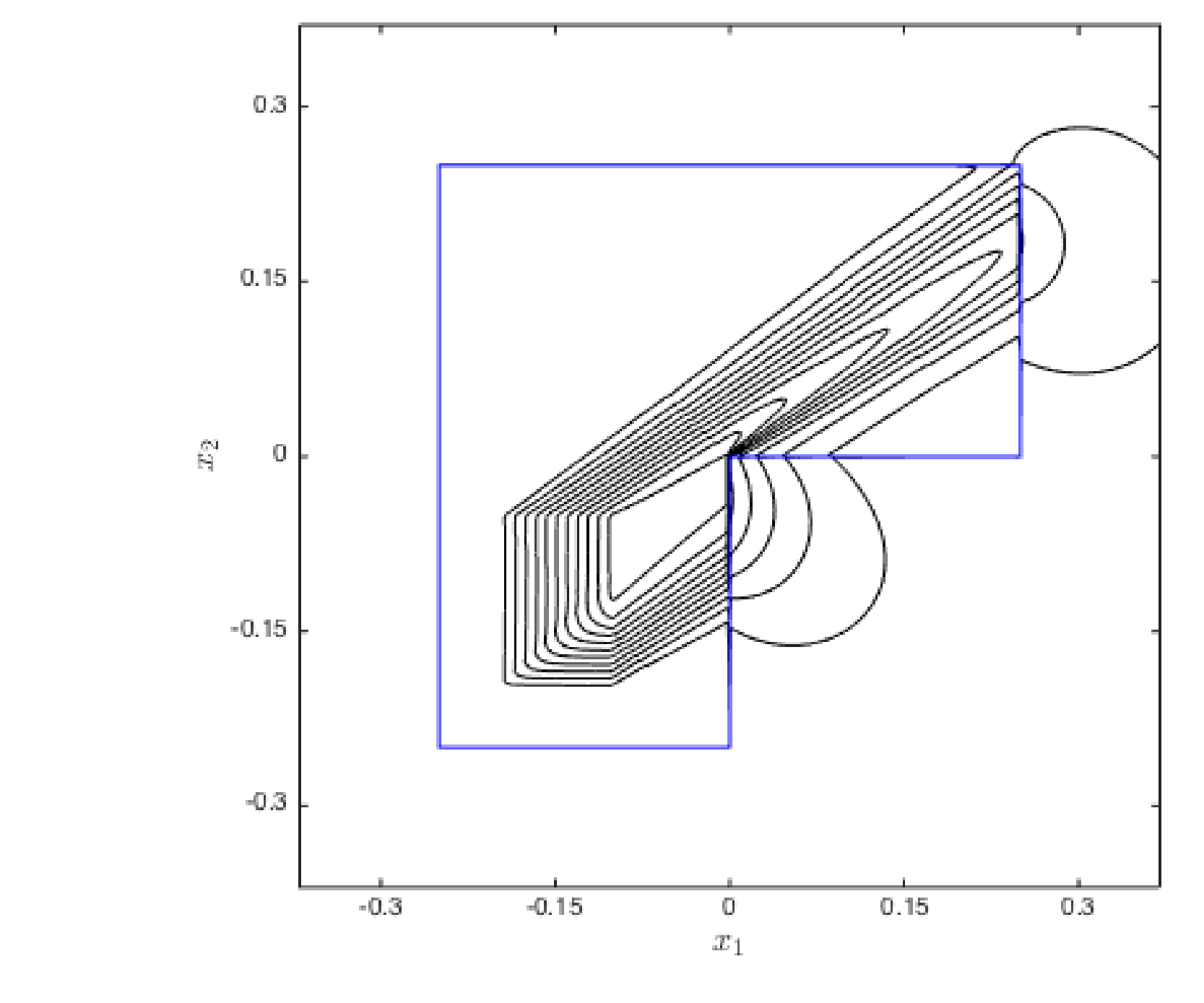}}
\end{center}
\caption{\label{fig:bsp3-solint}A convection approximation
without upwinding or any other stabilization leads to strong 
oscillations in~\subref{subfig:bsp3solnoupwind} in the example in  
subsection~\ref{ex:bsp3}. In~\subref{subfig:bsp3contour} we see
the transmission effects of the interior and exterior problem through
a contour line plot.}
\end{figure}

\begin{figure}
\begin{center}
\includegraphics[width=0.8\textwidth]{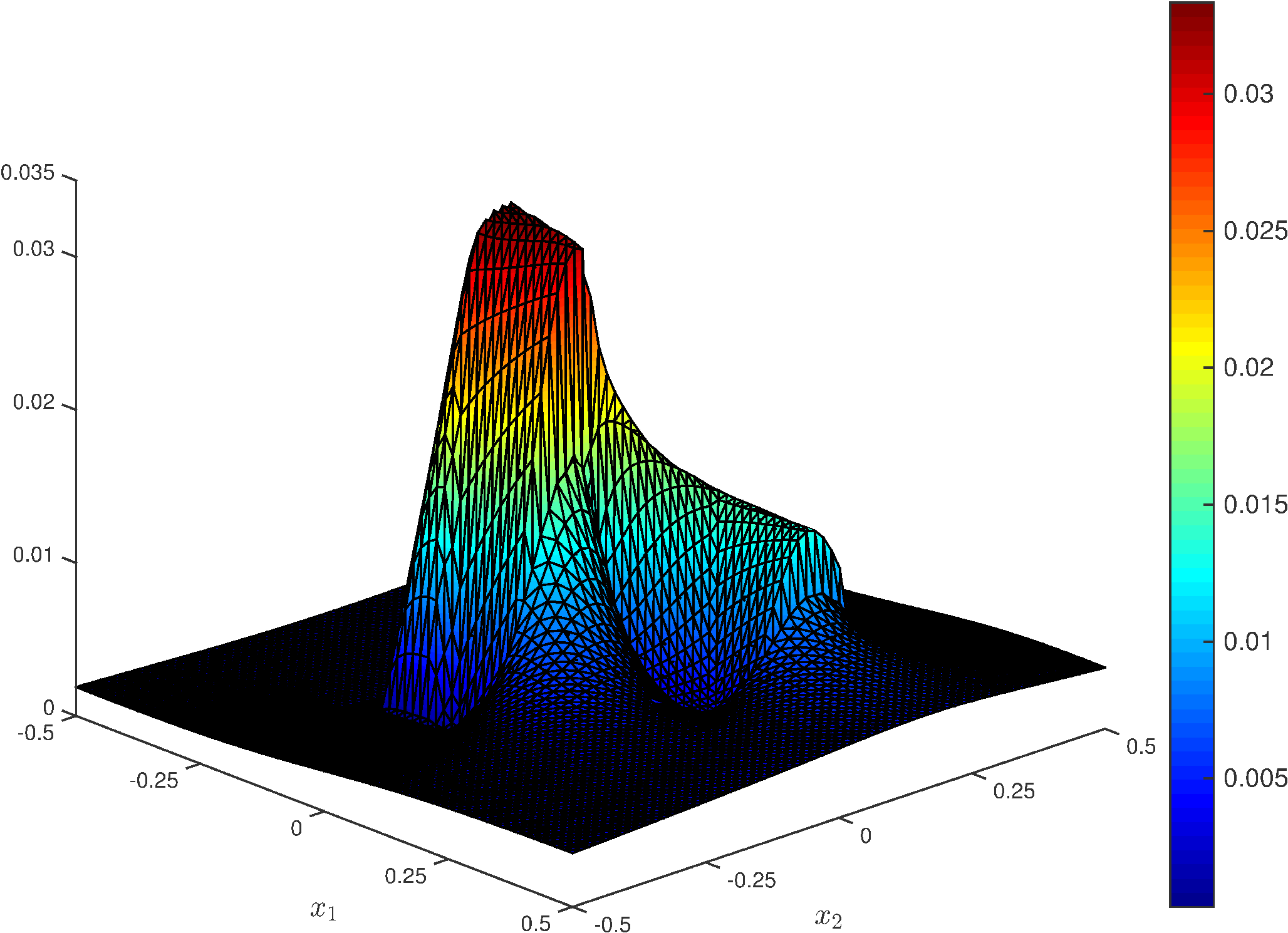}%
\end{center}
\caption{\label{fig:bsp3-sol}Interior and exterior solution
with full upwinding stabilization on
an uniformly generated mesh with $3072$ elements in the example in  
subsection~\ref{ex:bsp3}.}
\end{figure}
Our last example is a more practical problem.
The model can describe the stationary concentration of a 
chemical dissolved and
distributed in different fluids, where we have
a convection dominated problem in $\Omega$ and a diffusion distribution 
in $\Omega_e$. Note that the interior is a classical model problem and as described
above, the coupling with the exterior problem can `replace' the boundary condition, 
which might be difficult to find. 
Our interior domain $\Omega=(-1/4,1/4)^2\backslash \big([0,1/4]\times[-1/4,0]\big)$ 
is the classical L-shape. 
The diffusion matrix $\A=\alpha\,\I$ in $\Omega$ is piecewise constant and reads
\begin{align*}
  \alpha:\R\times\R\to\R:(x_1,x_2)\mapsto\begin{cases}\displaystyle
    10^{-7}& \text{for } x_2\leq 0,\\[1mm]
   \displaystyle 10^{-6}
    & \text{for } x_1>0,\\[1mm]
   \displaystyle 5\cdot 10^{-7}
    & \text{else}.
  \end{cases}
\end{align*}
Additionally, we choose  $\b=(15,10)^T$ and $\r=10^{-2}$. 
The source is in the lower square, i.e.
$f=5$ for $-0.2\leq x_1\leq -0.1$, $-0.2\leq x_2\leq -0.05$, and
$f=0$ elsewhere. 
We prescribe the jumps $u_0=0$ and $t_0=0$. Instead of a logarithmic radiation condition, 
we impose that $u=a_\infty+\OO(1/|x|)$ and $|x|\to \infty$ for an unknown $a_\infty\in\R$. An exterior solution of the Laplace equation satisfying this type of asymptotic behavior at infinity must have zero average of the 
normal derivative on $\Gamma$, see~\cite{Costabel:1985-1}. 
We must add $a_\infty$ to the representation formulas
for the exterior solution~\eqref{eq:repformula}
and~\eqref{eq:repformulaangle}, respectively, 
and~\eqref{eq:cal1} becomes
\begin{align*}
u_e|_\Gamma=(1/2+\KK)u_e|_\Gamma-\VV\phi+a_\infty.
\end{align*}
Thus we have an additional term  $\spe{\psi_h}{a_\infty}{\Gamma}$ 
on the left-hand side of~\eqref{eq2:fvm}
and an additional equation, which ensures $\spe{1}{\phi_h}{\Gamma}=0$ 
as the counterpart.  
We use the full upwind scheme, i.e.~\eqref{eq:weightfunctionfull}, 
for the approximation of the convection term and start with a mesh of
$12$ triangles.
This example is similar to the one in~\cite[Subsection 7.2]{Erath:2012-1} but
with a smaller diffusion. Note that the problem is highly convection dominated
and the analytical solution is unknown.
An interior solution without any stabilization is
plotted in Figure~\ref{fig:bsp3-solint}\subref{subfig:bsp3solnoupwind} and shows
strong oscillations.
In Figure~\ref{fig:bsp3-solint}\subref{subfig:bsp3contour} we see the contour
lines based on a solution generated on a mesh $\TT$ with $49152$ elements.
The transport is mainly from the source $f\not = 0$ in the left lower square
in the direction of the convection $\b$. We also can see the interaction
with the exterior domain, hence, the contour lines are circular.
In general, the solution of such a problem may have local phenomena such as
injection wells. As seen in Figure~\ref{fig:bsp3-sol} 
this leads to step layers on the boundary $(0,0)$ to $(0,-1/4)$, 
due to the convection in this direction and the different diffusion coefficient of the
interior and exterior problem.
Since we consider here a domain with a reentrant corner and
model data with jumps, it is well known that uniform mesh refinement 
can not guarantee
optimal convergence rates, i.e. $u\not \in H^2(\Omega)$.
An adaptive mesh refinement steered through a robust
a~posteriori estimator could lead to a more accurate solution as one
can find in a similar example for the FVM-BEM three field coupling approach
in~\cite{Erath:2013-1}.

\section{Conclusions}
We presented a new FVM-BEM coupling method based on the 
non-symmetric approach to solve
a transmission problem, i.e., a convection diffusion reaction problem in 
an interior domain coupled with a diffusion process in an unbounded exterior domain.
The resulting scheme maintains local flux conservation, also in the case when
an upwind scheme for convection dominated problems is used.
We showed ellipticity of the continuous and discrete system 
or for some model configurations the ellipticity
of their equivalent stabilized system. Additionally, we could improve
the theoretical elliptic constant from previous works.
Note that the stabilized FVM-BEM system was only used for
theoretical purposes.
This allowed us to show existence and uniqueness, convergence, and an a~priori
estimate.  
We stress that  for some critical
model configurations the assumptions on the data and regularity of the unknown solution
are weaker than for the comparable three field FVM-BEM coupling. 
Moreover, the non-symmetric approach has less
discrete unknowns and thus is computational cheaper.
Our work gives us a recipe
for the coupling of BEM with a non-Galerkin method like FVM.
Our theoretical results were confirmed by three numerical examples, which illustrate
the strength of the chosen method in terms of local flux conservation and convection
dominated problems.

\section*{Acknowledgements}
The third author was partially supported by NSF grant DMS 1216356.
\bibliographystyle{alpha}

\bibliography{literature}

\end{document}